\documentclass[11pt]{article}

\usepackage{amsthm}
\usepackage{amsmath}
\usepackage{amssymb}

\usepackage{graphicx}
\usepackage{tikz}

\usepackage{enumerate}

\oddsidemargin 0pt
\evensidemargin 0pt
\marginparwidth 40pt
\marginparsep 0pt
\voffset 0pt
\topmargin 0pt
\headheight 0pt
\headsep 0pt
\textheight 8.7in
\textwidth 6.7in

\newtheorem{theorem}{Theorem}[section]
\newtheorem{proposition}[theorem]{Proposition}
\newtheorem{lemma}[theorem]{Lemma}

\newcommand{\E}{{\mathbb{E}}}
\newcommand{\Prob}{{\mathbb{P}}}
\newcommand{\disc}{{\textup{disc}}}
\newcommand{\discP}{{\textup{disc}_P}}
\newcommand{\codeg}{{\textup{codeg}}}
\newcommand{\Bin}{{\textup{Bin}}}
\newcommand{\RandHyp}[3]{{\mathcal{H}_{#1}(#2,#3)}}

\newcommand{\ignore}[1]{}

\title{Discrepancy of random graphs and hypergraphs}
\author{
Jie Ma\thanks{Department of Mathematics, UCLA, Los Angeles, CA 90095. Email: {\tt jiema@math.ucla.edu}.
Research supported in part by AMS-Simons travel grant.}
\and Humberto Naves\thanks{Department of Mathematics, UCLA, Los Angeles, CA 90095. Email: {\tt hnaves@math.ucla.edu}.}
\and Benny Sudakov\thanks{Department of Mathematics, UCLA, Los Angeles, CA 90095. Email: {\tt bsudakov@math.ucla.edu}.
Research supported in part by NSF grant DMS-1101185, by AFOSR MURI grant FA9550-10-1-0569 and by a USA-Israel
BSF grant.}}

\date{}

\AtBeginDocument{\setlength{\abovedisplayskip}{7pt}}
\AtBeginDocument{\setlength{\belowdisplayskip}{7pt}}

\begin{document}
\maketitle
\setcounter{page}{1}

\vspace{-2em}
\begin{abstract}
Answering in a strong form a question posed by Bollob\'as and Scott,  in this paper we determine the discrepancy
between two random $k$-uniform hypergraphs, up to a constant factor depending solely on $k$.
\end{abstract}

\section{Introduction}
A {\em hypergraph} $H$ is an ordered pair $H=(V,E)$, where $V$ is a finite set (the {\em vertex set}), and $E$ is
a family of distinct subsets of $V$ ({\em the edge set}). The hypergraph $H$ is $k$-uniform if all its edges
are of size $k$. In this paper we consider only $k$-uniform hypergraphs. The \textit{edge density} of a
$k$-uniform hypergraph $H$ with $n$ vertices is $\rho_H=e(H)/\binom{n}{k}$. We define the \textit{discrepancy} of
$H$ to be
\begin{equation}\label{disc_H}
\disc(H)=\max_{S\subseteq V(H)} \left|e(S)- \rho_H \binom{|S|}{k} \right|,
\end{equation}
where $e(S)=e(H[S])$ is the number of edges in the sub-hypergraph induced by $S$. The discrepancy can be viewed as a
measure of how uniformly the edges of $H$ are distributed among the vertices. This important concept appears naturally
in various branches of combinatorics and has been studied by many researchers in recent years. The discrepancy is closely
related to the theory of quasi-random graphs (see \cite{chung_quasi}), as the property $\disc(G)=o(|V(G)|^2)$ implies
the quasi-randomness of the graph $G$.

Erd\H{o}s and Spencer \cite{erdos_imb} proved that for $k\ge 2$, any $k$-uniform hypergraph $H$ with $n$ vertices has
a subset $S$ satisfying $\left|e(S)-\frac{1}{2}\binom{|S|}{k}\right|\ge cn^{\frac{k+1}{2}}$, which implies the bound
$\disc(H)\ge cn^{\frac{k+1}{2}}$ for $k$-uniform hypergraphs $H$ of edge density $\frac{1}{2}$. Erd\H{o}s, Goldberg,
Pach and Spencer \cite{erdos_cutting}  obtained a similar lower bound for graphs of edge density smaller than
$\frac{1}{2}$. These results were later generalized by Bollob\'as and Scott in \cite{bollobas_disc}, who proved the
inequality $\disc(H)\ge c_k\sqrt{r}n^{\frac{k+1}{2}}$ for $k$-uniform hypergraphs $H$, whenever $r=\rho_H(1-\rho_H)\ge 1/n$. The
random hypergraphs show that all the aforementioned lower bounds are optimal up to constant factors. For more
discussion and general accounts of discrepancy, we refer the interested reader to Beck and S\'os \cite{beck_disc},
Bollob\'as and Scott \cite{bollobas_disc}, Chazelle \cite{chaz_the}, Matou\v sek \cite{mato_geom} and S\'os
\cite{sos_irre}.

A similar notion is the relative discrepancy of two hypergraphs. Let $G$ and $H$ be two $k$-uniform
hypergraphs over the same vertex set $V$, with $|V| = n$. For a bijection $\pi:V \to V$, let $G_\pi$ be obtained from
$G$ by permuting all edges according to $\pi$, i.e., $E(G_\pi) = \pi(E(G))$. The \textit{overlap} of $G$ and $H$ with
respect to $\pi$, denoted by $G_\pi\cap H$, is a hypergraph with the same vertex set $V$ and with edge set
$E(G_\pi)\cap E(H)$. The \textit {discrepancy of $G$ with respect to $H$} is
\begin{equation}\label{disc_GH} \disc(G,H) = \max_\pi
\left|e(G_\pi \cap H) - \rho_G\rho_H\binom{n}{k}\right|,
\end{equation}
where the maximum is taken over all bijections $\pi:V \to V$. For random bijections $\pi$, the expected size of $E(G_\pi)\cap E(H)$
is $\rho_G\rho_H\binom{n}{k}$, thus $\disc(G,H)$ measures how much the overlap can deviate from its
average. In a certain sense, the definition \eqref{disc_GH} is more general than \eqref{disc_H}, because one can write
$\disc(H)=\max_{1\le i\le n} \disc(G_i, H)$, where $G_i$ is obtained from the complete $i$-vertex $k$-uniform
hypergraph by adding $n-i$ isolated vertices.

Bollob\'as and Scott introduced the notion of relative discrepancy in \cite{bollobas_inter} and showed that for
any two $n$-vertex graphs $G$ and $H$, if $\frac{16}{n}\le \rho_G,\rho_H\le 1-\frac{16}{n}$, then
$\disc(G,H)\ge c\cdot f(\rho_G,\rho_H)\cdot n^{\frac{3}{2}}$, where $c$ is an absolute constant and
$f(x,y)=x^2(1-x)^2y^2(1-y)^2$. As a corollary, they proved a conjecture in \cite{erdos_cutting} regarding the
{\it bipartite discrepancy} $\disc(G,K_{\lfloor\frac{n}{2}\rfloor, \lceil\frac{n}{2} \rceil})$. Moreover, they also
conjectured that a similar bound holds for $k$-uniform hypergraphs, namely, there exists $c=c(k,\rho_G, \rho_H)$ for
which $\disc(G,H) \ge cn^{\frac{k+1}{2}}$ holds for any $k$-uniform hypergraphs $G$ and $H$ satisfying
$\frac{1}{n} \le \rho_G, \rho_H \le 1 - \frac{1}{n}$.

In their paper, Bollob\'as and Scott also asked the following question (see Problem 12 in \cite{bollobas_inter}). Given
two random $n$-vertex graphs $G,H$ with constant edge probability $p$, what is the expected value of $\disc(G,H)$?
In this paper, we solve this question completely for general $k$-uniform hypergraphs. Let $\RandHyp{k}{n}{p}$ denote
the random $k$-uniform hypergraph on $n$ vertices, in which every edge is included independently with probability $p$.
We say that an event happens \textit{with high probability}, or w.h.p. for brevity, if it happens with probability at
least $1 - n^{-w(n)}$, where here and later $w(n)>0$ denotes an arbitrary function tending to infinity together with $n$.
\begin{theorem}
\label{theorem_main}
For positive integers $n$ and $k$, let $N={n-\frac{n}{k} \choose k-1}$. Let $G$ and $H$ be
two random hypergraphs distributed according to $\RandHyp{k}{n}{p}$ and $\RandHyp{k}{n}{q}$ respectively, where
$\frac{w(n)}{N} \le p \le q \le \frac12$.
\begin{enumerate}
\item [(1)] {\small \sc dense case} -- If $pqN > \frac{1}{30}\log n$, then w.h.p.
  $\disc(G,H)= \Theta_k\left(\sqrt{pq\binom{n}{k}n\log n} \right)$;
\item [(2)] {\small \sc sparse case} -- If $pq N\le \frac{1}{30} \log n$, let
  $\gamma = \frac{\log n}{pq N}$, then
\begin{enumerate}
\item [(2.1)] if $pN \ge \frac{\log n}{5\log \gamma}$, then w.h.p.
  $\disc(G,H)= \Theta_k\left(\frac{n \log n}{\log \gamma} \right)$.
\item [(2.2)] if $pN< \frac{\log n}{5\log \gamma}$, then w.h.p.
  $\disc(G,H)= \Theta_k\left(p\binom{n}{k} \right)$.
\end{enumerate}
\end{enumerate}
\end{theorem}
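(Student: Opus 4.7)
The statement consists of three matching upper and lower bounds on $\disc(G,H)$, which I would treat separately.

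\textbf{Upper bounds: one permutation plus union bound.} For any fixed bijection $\pi$, $G_\pi$ has the same distribution as $G\sim\RandHyp{k}{n}{p}$ and is independent of $H\sim\RandHyp{k}{n}{q}$. Hence every $k$-subset of $V$ lies in $E(G_\pi)\cap E(H)$ independently with probability $pq$, so $e(G_\pi\cap H)\sim\Bin(\binom{n}{k},pq)$. A union bound over all $n!$ bijections reduces the problem to a single-permutation tail estimate that must beat $e^{-n\log n}$. In regime (1), Chernoff at scale $\sqrt{pq\binom{n}{k}\cdot n\log n}$ gives tail $e^{-\Omega(n\log n)}$. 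In regime (2.1) one uses the Poisson upper tail $(e\mu/t)^{t}$ with $\mu=pq\binom{n}{k}=\Theta_k(n\log n/\gamma)$ and $t=\Theta_k(n\log n/\log\gamma)$, obtaining $e^{-\Omega(t\log\gamma)}=e^{-\Omega(n\log n)}$. In regime (2.2) the bound $\disc(G,H)\le e(G)+pq\binom{n}{k}=O(p\binom{n}{k})$ is immediate from Chernoff concentration of $e(G)$ around $p\binom{n}{k}$.

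\textbf{Lower bounds.} The variables $\{e(G_\pi\cap H)\}_\pi$ are strongly correlated across $\pi$, so one must construct an explicit witness bijection rather than appeal to anti-concentration over $n!$ ``independent'' copies.

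For regime (1), I would fix a subset $T\subseteq V$ with $|T|=\Theta(n)$, reveal all edges of $G$ and $H$ not incident to $T$, and restrict attention to the $|T|!$ bijections acting as the identity on $V\setminus T$. For a typical realisation of the revealed randomness, the conditional distribution of $e(G_\pi\cap H)$ over this family is approximately Gaussian with variance of order $pq\binom{n}{k}$, and a Paley--Zygmund / second moment argument forces some $\pi$ in the family to exceed the mean by $\sqrt{pq\binom{n}{k}\cdot n\log n}$.

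For regime (2.1), the extremal deviation is produced by a \emph{dense cluster}. The probability that a fixed $s$-subset of $V$ spans $\tau$ edges of $H\sim\RandHyp{k}{n}{q}$ decays like $\exp(-\Theta_k(\tau\log(\tau/q\binom{s}{k})))$; balancing this against the $\binom{n}{s}$-fold union bound pinpoints an optimal cluster size $s^{*}$ at which $H$ w.h.p.\ contains an $s^{*}$-set spanning $\Theta_k(n\log n/\log\gamma)$ edges. One then chooses $\pi$ to align a similar dense cluster of $G$ onto this cluster of $H$, producing the required overlap.

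For regime (2.2), one builds $\pi$ packing essentially all of $E(G)$ into $E(H)$. Since both hypergraphs have maximum degree controlled by the small quantities $pN$ and $qN$, a suitable embedding argument (random greedy with collision analysis, or a hypergraph packing result for random hypergraphs) produces $\pi$ with $e(G_\pi\cap H)\ge(1-o(1))e(G)$; together with $q\le\tfrac{1}{2}$ this gives deviation $(1-q)e(G)=\Theta_k(p\binom{n}{k})$.

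\textbf{Main obstacle.} The most delicate step is the lower bound in regime (2.1): one must uniformly control the upper tail of $\Bin(\binom{s}{k},q)$ across all $s$, identify the correct $s^{*}$ that saturates the union bound, and then realise the alignment of dense clusters of $G$ and $H$ by an explicit bijection without losing in the constant factor $\Theta_k$.
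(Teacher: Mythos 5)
Your upper-bound section is essentially the paper's own argument: for fixed $\pi$ one has $e(G_\pi\cap H)\sim\Bin\big(\binom{n}{k},pq\big)$, and a union bound over all $n!$ bijections with Chernoff (dense case), with the tail bound $\binom{\binom{n}{k}}{\lambda}(pq)^\lambda\le\big(e\,pq\binom{n}{k}/\lambda\big)^\lambda$ (case (2.1)), and with $e(G)=O(p\binom{n}{k})$ (case (2.2)) gives the claimed bounds. The one omission is bookkeeping: $\disc(G,H)$ is centered at $\rho_G\rho_H\binom{n}{k}$, not at $pq\binom{n}{k}$, so you still need the analogue of Lemma~\ref{lemma_prob_disc}, which shows the two centerings differ by $O\big(n^{1/4}\sqrt{pq\binom{n}{k}}\big)$, a lower-order term. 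That part is routine.

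The lower bounds are where the genuine gaps are, and in the sparse regimes your proposed extremal structures are wrong, not merely unproved. In the dense case, your Paley--Zygmund step over the $|T|!$ bijections fixing $V\setminus T$ is exactly the difficulty, not a tool that resolves it: to run a second-moment argument on $Z=\#\{\pi:\ e(G_\pi\cap H)>\mu+\lambda\}$ with $\lambda=\sqrt{pq\binom{n}{k}n\log n}$ you must control $\E[Z^2]$ over the $e^{\Theta(n\log n)}$ pairs $(\pi,\pi')$ whose symmetric difference is small; for such near-diagonal pairs the two overlap counts share almost all their summands, the joint tail is vastly larger than the product of the marginals, and whether the near-diagonal contribution is dominated depends delicately on the constant in $\lambda$. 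You assert this step rather than prove it. The paper avoids the issue entirely by an explicit local construction: it permutes only a set $L$ of size $n/k$, forms the bipartite graphs $\widetilde G,\widetilde H$ between $L$ and the $(k-1)$-tuples of $V\setminus L$ (so that $e(\widetilde G_\pi\cap\widetilde H)=\sum_{u\in L}\codeg(u,\pi(u))$), and uses the hypergeometric lower-tail bound of Lemma~\ref{hypergeom_ineq} to show each pair $(u,v)$ has $\codeg(u,v)\ge pqN+\Omega(\sqrt{pqN\log n})$ with probability at least $n^{-1/2}$. Since $n^{-1/2}\gg 1/|L|$, the auxiliary graph of such pairs has (after a pruning step making it nearly regular, plus Lemma~\ref{lemma_matching}) a matching of size $\frac{n}{50k}$, and Janson's inequality (Lemma~\ref{average}) handles the unmatched vertices; the per-pair gains of $\sqrt{pqN\log n}$ then sum to $\Theta_k\big(n\sqrt{pqN\log n}\big)=\Theta_k\big(\sqrt{pq\binom{n}{k}n\log n}\big)$. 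The deviation is assembled from $\Theta(n)$ small local deviations, never extracted from a global maximum of correlated variables.

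In case (2.1) your dense-cluster picture fails for two reasons. First, it is circular: if the $G$-cluster and the $H$-cluster have $\tau_G$ and $\tau_H$ edges on $s^*$ vertices, a generic alignment gives only $\approx\tau_G\tau_H/\binom{s^*}{k}$ common edges, and beating this by choosing the alignment \emph{is} the overlap problem you set out to solve; it is only trivial if the clusters are nearly complete. Second, nearly complete clusters of the required size do not exist when $p\ll q$: take $k=2$, $p=n^{-0.9}$, $q=n^{-0.1}$, so $pqN=\frac12$, $\gamma=2\log n$, and the truth is $\Theta(n\log n/\log\log n)$; a first-moment bound $\binom{n}{s}(4p)^{\binom{s}{2}/2}$ shows the largest half-complete vertex subset of $G$ has $O(1)$ vertices, so no cluster-alignment can produce $\Theta(n\log n/\log\log n)$ overlap edges. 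The correct extremal structure is spread out: $\Theta(n)$ disjoint stars. The paper matches each $u\in L_G$ greedily to some $v\in L_H$ with $\codeg(u,v)\ge\frac{\log n}{6\log\gamma}$, using case (1) of Lemma~\ref{sparse1} applied to disjoint pieces $N_u$ of the neighborhoods, and sums over $(1-o(1))\frac{n}{k}$ matched pairs. Similarly, in case (2.2) packing \emph{all} of $E(G)$ into $E(H)$ is both stronger than needed and unsupported: near the top of this regime (e.g.\ $k=2$, $p=10/n$, $q=n^{-1/30}$) $G$ has a giant component with cycles while $q$ lies below the thresholds of known embedding or universality theorems, so ``a suitable embedding argument'' cannot simply be cited. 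The paper again packs only stars: for each $u\in L_G$ it finds, via case (2) of Lemma~\ref{sparse1}, a partner $v$ with $N_u\subseteq N_{\widetilde H}(v)$; because the sets $N_u$ are chosen disjoint there are no collision constraints, and these edges alone give overlap $(1-o(1))\frac{n}{k}pN=\Theta_k\big(p\binom{n}{k}\big)$, which exceeds the mean $pq\binom{n}{k}$ by a constant factor since $q\le\frac12$.
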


The previous theorem also provides tight bounds when $p$ and/or $q  \ge \frac12$, as we shall
see in the concluding remarks. The result of Theorem~\ref{theorem_main} in the sparse range is
closely related to the recent work of the third author with Lee and Loh \cite{sudakov_similarity}. Among other
results, the authors of \cite{sudakov_similarity} show that two independent copies $G, H$ of the random graph $G(n,p)$
with $p \ll \sqrt{\log n/n}$ w.h.p. have overlap of order $\Theta\left(n\frac{\log n}{\log \gamma}\right)$, where
$\gamma= \frac{\log n}{p^2n}$. Hence $\disc(G,H)=\Theta\left(n\frac{\log n}{\log \gamma}\right)$ holds, since in
this range of edge probability, $n\frac{\log n}{\log \gamma}$ is larger than the average overlap $p^2\binom{n}{2}$. Our proof in the
sparse case borrows some ideas from \cite{sudakov_similarity}. On the
other hand, one can not use their approach for all cases, hence some new ideas were needed
to prove Theorem~\ref{theorem_main}.

It will become evident from our proof that the problem of determining the discrepancy can be essentially reduced to the
following question. Let $K>0$ and let $X$ be a binomial random variable with parameters $m$ and $\rho$.
What is the maximum value of $\Lambda = \Lambda(m,\rho,K)$ satisfying
$\Prob\big[X-m\rho > \Lambda\big] \ge e^{-K}$? This question is related to the rate function of binomial distribution.
In all cases, the discrepancy in the statement of Theorem~\ref{theorem_main} is w.h.p.
\begin{equation}
\label{equation_true_answer}
\disc(G,H) = \Theta_k\left(n \cdot \Lambda\Big( p\binom{n-1}{k-1}, q, \log n\Big)\right).
\end{equation}
Note that $p\binom{n-1}{k-1}$ is roughly the size of the neighborhood of a vertex in the hypergraph $G$.

The rest of this paper is organized as follows. Section~\ref{section_concentration} contains a list of inequalities
and technical lemmas used throughout the paper. In section~\ref{section_upper}, we define the
{\it probabilistic discrepancy} $\discP(G,H)$ and prove that w.h.p. it does not deviate too much from $\disc(G,H)$.
Additionally, we establish the upper bounds for $\disc(G,H)$
based on analogous bounds for $\discP(G,H)$. In section~\ref{section_lower}, we give a detailed proof of the lower
bounds. The final section contains some concluding remarks and open problems. In this paper, the function $\log$ refers to the natural
logarithm and all asymptotic notation symbols ($\Omega$, $O$, $o$ and $\Theta$) are with respect to the variable $n$.
Furthermore, the $k$-subscripts in these symbols indicate the dependence on $k$ in the relevant constants.

\section{Auxiliary results}
\label{section_concentration}

In this section we list and prove some useful concentration inequalities about the binomial and hypergeometric
distributions and also prove a corollary from the well-known Vizing's Theorem which asserts the existence of a
linear-size matching in {\it nearly regular} graphs (i.e., the maximum degree is close to the average degree).
We will not attempt to optimize our constants, preferring rather to choose values which provide a simpler
presentation. Let us start with classical Chernoff-type estimates for the tail of the binomial
distribution (see, e.g., \cite{alonspencer}).

\begin{lemma}
\label{chernoff_ineq} Let $X=\sum_{i=1}^l X_i$ be the sum of independent zero-one random variables with average
$\mu = \E[X]$. Then for all non-negative $\lambda \le \mu$, we have
$\Prob[|X-\mu| > \lambda] \le 2e^{-\frac{\lambda^2}{4\mu}}$.
\end{lemma}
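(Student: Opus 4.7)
The plan is to apply the classical Chernoff argument via exponential moments, handling the upper and lower tails separately and combining them by the union bound (which is where the factor of $2$ in the statement will come from).

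For the upper tail I would introduce an auxiliary parameter $t \ge 0$ and apply Markov's inequality to the random variable $e^{tX}$, giving
\[
\Prob[X - \mu > \lambda] \;\le\; e^{-t(\mu + \lambda)}\, \E[e^{tX}].
\]
Writing $p_i = \E[X_i]$, the independence of the $X_i$ together with the pointwise bound $\E[e^{t X_i}] = 1 + p_i(e^t - 1) \le \exp\!\bigl(p_i(e^t - 1)\bigr)$ yields $\E[e^{tX}] \le \exp\!\bigl(\mu(e^t - 1)\bigr)$, and hence
\[
\Prob[X - \mu > \lambda] \;\le\; \exp\!\bigl(\mu(e^t - 1) - t(\mu + \lambda)\bigr).
\]
Optimizing in $t$ with the standard choice $t = \log(1 + \lambda/\mu)$ gives the usual rate-function bound $\exp\!\bigl(-\mu\, \phi(\lambda/\mu)\bigr)$, where $\phi(x) = (1+x)\log(1+x) - x$. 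For the lower tail I would run the symmetric argument with $t \le 0$ (or apply the upper-tail calculation to $l - X = \sum (1 - X_i)$), obtaining $\exp\!\bigl(-\mu\, \phi(-\lambda/\mu)\bigr)$.

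To convert these Bennett-type bounds into the cleaner form stated in the lemma, it suffices to verify the one-variable inequality $\phi(x) \ge x^2/4$ for $0 \le x \le 1$ and $\phi(-x) \ge x^2/4$ for $0 \le x \le 1$; both follow from elementary calculus (either a Taylor expansion of $\phi$ around $0$, or a direct check that the derivative of $\phi(x) - x^2/4$ is nonnegative on $[0,1]$ together with $\phi(0) = 0$). The hypothesis $\lambda \le \mu$ guarantees $\lambda/\mu \in [0,1]$, so these inequalities apply and each tail is bounded by $\exp(-\lambda^2/(4\mu))$; summing the two contributions produces the factor of $2$.

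The only genuine ``obstacle'' is the verification of the elementary inequality $\phi(x) \ge x^2/4$ on $[0,1]$, but this is entirely routine, and in practice I would simply cite the version of the Chernoff bound in Alon--Spencer \cite{alonspencer} as indicated in the excerpt. The constant $1/4$ is not tight (the sharper $e^{-\lambda^2/(2\mu)}$ holds for the lower tail and $e^{-\lambda^2/(2\mu+2\lambda/3)}$ holds for the upper tail), but the weaker form above is chosen for notational simplicity in subsequent applications.
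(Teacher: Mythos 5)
Your proposal is correct. Note that the paper itself gives no proof of this lemma: it is quoted as a classical Chernoff-type estimate with a pointer to Alon--Spencer, which is exactly the citation route you indicate as the practical fallback. Your self-contained derivation is the standard one and checks out: exponential moments, the bound $\E[e^{tX_i}] = 1 + p_i(e^t-1) \le \exp(p_i(e^t-1))$, optimization at $t=\log(1+\lambda/\mu)$ (resp.\ $t=\log(1-\lambda/\mu)<0$) to get the rate functions $\phi(\lambda/\mu)$ and $\phi(-\lambda/\mu)$, then the elementary inequalities $\phi(x)\ge x^2/4$ and $\phi(-x)\ge x^2/2\ge x^2/4$ on $[0,1]$ --- the hypothesis $\lambda\le\mu$ enters precisely here, for the upper tail --- and a union bound giving the factor $2$. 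One caveat: your parenthetical alternative for the lower tail, namely applying the upper-tail calculation to $l-X=\sum_i(1-X_i)$, does \emph{not} yield the stated bound, because that sum has mean $l-\mu$, so the exponent it produces is $-(l-\mu)\,\phi\bigl(\lambda/(l-\mu)\bigr)\approx -\lambda^2/\bigl(2(l-\mu)\bigr)$, which is far weaker than $-\lambda^2/(4\mu)$ when $l\gg\mu$ (e.g.\ when all $p_i$ are small). This is harmless since it is only an aside and your primary route via $t\le 0$ is correct, but the complement trick should be dropped.
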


The following lower tail inequality (see \cite{alonspencer}) is due to Janson.

\begin{lemma}
\label{janson_ineq}
Let $A_1, A_2,..., A_l$ be subsets of a finite set $\Omega$, and let $R$ be a random subset of $\Omega$ for which
the events $r \in R$ are mutually independent over $r \in \Omega$. Define $X_j$ to be the indicator random variable
of $A_j\subset R$. Let $X=\sum_{j=1}^l X_j$, $\mu= \E[X]$, and $\Delta=\sum_{i\sim j} \E[X_i\cdot X_j]$, where
$i\sim j$ means that $X_i$ and $X_j$ are dependent (i.e., $A_i$ intersects $A_j $). Then for any $\lambda>0$,
$$\Prob[X\le \mu-\lambda]< e^{-\frac{\lambda^2}{2\mu+\Delta}}\,.$$
\end{lemma}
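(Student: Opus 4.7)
The plan is to prove this lower-tail concentration inequality by the Cramér--Chernoff exponential moment method, where the dependency parameter $\Delta$ enters as a correction to the moment generating function bound.

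For $t\ge 0$, Markov's inequality applied to $e^{-tX}$ gives
\[
\Prob[X \le \mu - \lambda] \;\le\; e^{t(\mu - \lambda)}\, \E[e^{-tX}],
\]
so the task reduces to an upper bound on the MGF of $X$. Since each $X_j\in\{0,1\}$, one may write $e^{-tX} = \prod_j(1-sX_j)$ with $s = 1-e^{-t}\in[0,1]$, and the heart of the proof is to estimate $\E\bigl[\prod_j(1-sX_j)\bigr]$.

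To obtain the estimate, I would order the indices arbitrarily and write the MGF as a telescoping product of conditional ratios
\[
\E\Bigl[\prod_{j=1}^l(1-sX_j)\Bigr] \;=\; \prod_{k=1}^l \frac{\E\bigl[\prod_{j\le k}(1-sX_j)\bigr]}{\E\bigl[\prod_{j<k}(1-sX_j)\bigr]}.
\]
Each conditional factor is controlled by expanding through second order in $s$. The key structural input is that $X_k$ is independent of those $X_i$ with $i\not\sim k$ (they are determined by disjoint portions of the ground set $\Omega$), so conditioning on them does not perturb the marginal of $X_k$; only the dependent neighbors $i\sim k$ modify the effective conditional probability, with a correction bounded by the pairwise overlaps $\E[X_iX_k]$. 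Telescoping, taking logarithms via $\log(1-x)\le -x$, and aggregating over $k$, the first-order contributions give $-s\mu$ and the second-order contributions aggregate to an explicit multiple of $\Delta$, yielding a bound of the form $\log\E[e^{-tX}]\le -s\mu + cs^2\Delta$. Substituting into the Markov bound, using the elementary inequalities $s\le t$ and $t-s\le s^2$ (valid for $s\le 1/2$), produces a quadratic-in-$t$ upper bound on $\log\Prob[X\le\mu-\lambda]$, and optimizing over $t$ recovers the exponent $-\lambda^2/(2\mu+\Delta)$. In the regime $\lambda > \mu$ the conclusion is trivial since $X\ge 0$ forces $\Prob[X\le \mu-\lambda] = 0$.

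\textbf{Main obstacle.} The critical step is the MGF estimate. Standard correlation inequalities of FKG/Harris type are ineffective here — applied to the monotone decreasing functions $(1-sX_j)$ they give only the reverse inequality $\E[\prod_j(1-sX_j)]\ge\prod_j(1-sp_j)$, which is the wrong direction for Chernoff. The proof must instead exploit the specific combinatorial structure: the $X_j$ are indicators of upward-closed events on a product Bernoulli space whose dependencies arise solely from shared ground elements, so that only \emph{pairwise} overlaps $A_i\cap A_j$ — and no higher-order intersections — appear in the final bound. Executing the conditional-expectation analysis tightly enough to obtain the precise constant in the denominator $2\mu+\Delta$, rather than the weaker $2(\mu+\Delta)$ one gets from a naive variance plus Taylor argument, is where the proof demands its most delicate bookkeeping.
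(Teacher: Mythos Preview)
The paper does not prove this lemma. It is stated as a known result due to Janson, with a citation to Alon--Spencer, and is invoked later as a black box in the proof of Lemma~\ref{average}. There is thus no ``paper's own proof'' to compare your attempt against.

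On its own merits, your sketch follows the standard route by which Janson's lower-tail inequality is established in the literature: apply the Cram\'er--Chernoff method, rewrite $e^{-tX}=\prod_j(1-sX_j)$ with $s=1-e^{-t}$, and bound the product by a telescoping conditional-expectation argument that exploits the fact that $X_k$ is independent of those $X_i$ with $A_i\cap A_k=\emptyset$. This is indeed how the result is proved (see, e.g., Janson--\L uczak--Ruci\'nski or the treatment in Alon--Spencer), so the architecture is sound.

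The one place where your sketch remains vague is the bookkeeping that pins down the denominator $2\mu+\Delta$. You write the MGF bound as $\log\E[e^{-tX}]\le -s\mu + cs^2\Delta$ with an unspecified constant $c$, and separately absorb the $(t-s)\mu$ term via $t-s\le s^2$, but you never show how these pieces combine to yield exactly $2\mu+\Delta$ after optimizing in $t$. Different sources state the inequality with $\mu+\Delta$, $2(\mu+\Delta)$, or $2\mu+\Delta$ in the denominator depending on whether $\Delta$ sums over ordered or unordered pairs and on how tightly the telescoping step is carried out; recovering the specific constant quoted here requires care you have not supplied. This is a missing detail rather than a conceptual gap, and would not be hard to fill in from the references.
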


In the proof of the dense case of the main theorem we will need a lower bound for the tail of the hypergeometric distribution. To prove it
we use the following well-known estimates for the binomial coefficient.

\begin{proposition}
\label{binomial_ineq}
Let $H(p)=p\log p+(1-p)\log (1-p)$, then for any integer $m>0$ and real $p\in (0,1)$ satisfying $pm\in \mathbb{Z}$
we have
$$\frac{\sqrt{2\pi}}{e^2}\le \binom{m}{pm} \sqrt{mp(1-p)} e^{m H(p)}\le \frac{e}{2\pi}.$$
\end{proposition}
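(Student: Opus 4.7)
The approach is a direct application of Stirling's formula to the three factorials in $\binom{m}{pm} = m!/\bigl((pm)!\,((1-p)m)!\bigr)$. I will use the classical two-sided bound
\[
\sqrt{2\pi}\, n^{n+1/2} e^{-n} \le n! \le e\, n^{n+1/2} e^{-n}\qquad (n \ge 1),
\]
in which the universal constants $\sqrt{2\pi}$ and $e$ are best possible (the lower bound is asymptotically tight as $n\to\infty$, the upper bound is tight at $n=1$). Applying these estimates in opposite directions to the numerator and to the two denominator factorials will produce both the upper and lower bounds in the proposition via essentially the same algebra.

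For the upper bound on $\binom{m}{pm}$, I apply the Stirling upper bound to $m!$ and the Stirling lower bound to each of $(pm)!$ and $((1-p)m)!$. The three $e^{-n}$ factors cancel exactly since $pm+(1-p)m=m$, and the polynomial part simplifies as
\[
\frac{m^{m+1/2}}{(pm)^{pm+1/2}\bigl((1-p)m\bigr)^{(1-p)m+1/2}} = \frac{1}{\sqrt{mp(1-p)}}\cdot p^{-pm}(1-p)^{-(1-p)m};
\]
by the definition of $H$, the last factor equals $e^{-mH(p)}$. The constants combine as $e/(\sqrt{2\pi})^2 = e/(2\pi)$, and rearranging gives the stated upper bound. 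Swapping each Stirling estimate (lower bound on $m!$, upper bound on the two denominator factorials) runs through the identical algebraic simplification and yields the constant $\sqrt{2\pi}/(e\cdot e) = \sqrt{2\pi}/e^2$, which is the stated lower bound.

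I do not expect a genuine obstacle: the argument is a short, routine calculation. The one point requiring a moment of care is the choice of the Stirling bounds — one must pick versions whose universal constants hold for every integer $n\ge 1$ (not merely asymptotically) and are sharp enough to recover the exact constants in the proposition; this is completely standard, but it is the only place where plugging in a sloppy form of Stirling would fail to match the claimed $\sqrt{2\pi}/e^2$ and $e/(2\pi)$.
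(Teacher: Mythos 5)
Your proof is correct and follows exactly the paper's approach: the paper also derives the proposition from the two-sided Stirling bound $\sqrt{2\pi m}\,(m/e)^m \le m! \le e\sqrt{m}\,(m/e)^m$, applied in opposite directions to the numerator and denominator factorials. Your write-up simply makes explicit the algebra (and the care about constants valid for all $n\ge 1$) that the paper leaves to the reader.
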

\begin{proof}
This can be derived from Stirling's formula $\sqrt{2\pi m} \left(\frac{m}{e}\right)^m\le m!\le
e \sqrt{m} \left(\frac{m}{e}\right)^m$.
\end{proof}

\begin{lemma}
\label{hypergeom_ineq}
Let $d_1$, $d_2$, $\Delta$ and $N$ be integers and $K$ be a real parameter such that $1 \le d_1,d_2 \le \frac{2N}{3}$,
$1 \le K \le \frac{d_1d_2}{100N}$ and $\Delta = \sqrt{\frac{d_1 d_2K}{N}}$. Then
$$\sum_{t\ge \frac{d_1d_2}{N}+\Delta} \frac{\binom{d_1}{t} \binom{N-d_1}{d_2-t}}{\binom{N}{d_2}}\ge e^{-40K}.$$
\end{lemma}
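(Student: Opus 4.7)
The plan is to lower-bound the sum by considering a block of $\Theta(\Delta)$ consecutive integer values of $t$ around $\mu + \Delta$, where $\mu = d_1 d_2/N$, and to estimate each term $f(t) := \binom{d_1}{t}\binom{N-d_1}{d_2-t}/\binom{N}{d_2}$ using Proposition~\ref{binomial_ineq}. A single term $f(t_0)$ with $t_0 \approx \mu + \Delta$ yields only a bound of the form $e^{-cK}/\sqrt{\mu}$ for some absolute constant $c$, which is insufficient when $\mu$ is large; however, summing $\Theta(\Delta)$ nearby terms picks up an extra factor of $\Delta/\sqrt{\mu} = \sqrt{K}$, so the total bound is $\sqrt{K}\cdot e^{-cK} \ge e^{-40K}$ for $K \ge 1$.

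First I would apply Proposition~\ref{binomial_ineq} to each of the three binomial coefficients appearing in $f(t)$, obtaining
$$f(t) \,\ge\, \frac{C_0}{\sigma}\cdot e^{E(t)}, \qquad E(t) := N H(d_2/N) - d_1 H(t/d_1) - (N-d_1)\, H\bigl((d_2-t)/(N-d_1)\bigr),$$
where $\sigma^2 = d_1 d_2(N-d_1)(N-d_2)/N^3$ is the hypergeometric variance. The prefactor is of order $1/\sigma$ throughout our block because each of the four quantities $t,\, d_1-t,\, d_2-t,\, N-d_1-d_2+t$ stays within a constant factor of its value at $t = \mu$; the hypotheses $d_1, d_2 \le 2N/3$ and $K \le d_1 d_2/(100 N)$ ensure all four remain positive and well-separated from zero.

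Next I would verify by direct calculation that $E(\mu)=0$, $E'(\mu)=0$, and $E''(\mu) = -1/\sigma^2$. The first two follow from the observation that at $t=\mu$ the three arguments $t/d_1$, $(d_2-t)/(N-d_1)$, and $d_2/N$ all equal $d_2/N$, and the third uses $H''(p)=1/(p(1-p))$ together with a short algebraic simplification. A Taylor expansion then gives $E(t) = -(t-\mu)^2/(2\sigma^2) + R(t)$; for $t\in[\mu+\Delta,\mu+2\Delta]$ the quadratic term is at worst $-(2\Delta)^2/(2\sigma^2) \ge -18K$ (using $\sigma^2 \ge \mu/9$, a consequence of $d_1,d_2 \le 2N/3$), and the remainder $R(t)$ can be bounded by a constant multiple of $K$ using the hypothesis $K\le d_1d_2/(100N)$, which forces the deviations $(t-\mu)/d_1$ and $(t-\mu)/(N-d_1)$ to be small. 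Summing $f(t) \ge (C_0/\sigma)\,e^{-cK}$ over the roughly $\Delta$ integer values of $t \in [\mu+\Delta,\mu+2\Delta]$ then gives at least $C_1\sqrt{K}\,e^{-cK}$, which exceeds $e^{-40K}$ for $K\ge 1$ with ample slack.

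The main obstacle will be the rigorous bound on the Taylor remainder $R(t)$, particularly when $d_2/N$ is close to $0$ or $1$, since $H'''(p) = -1/p^2 + 1/(1-p)^2$ blows up in those regimes. A short case analysis based on whether $d_2/N$ is bounded away from $\{0,1\}$ or not is needed: in both cases the condition $K \le d_1d_2/(100N)$ is exactly what is required to keep the cubic error of order $K$ rather than $K^{3/2}$. The generous exponent $40$ in the conclusion is chosen precisely to absorb these remainders, together with the non-optimal constants from Proposition~\ref{binomial_ineq}, without having to track them tightly.
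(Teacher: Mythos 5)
Your proposal is correct and shares the paper's skeleton: both proofs lower-bound the sum by the $\Theta(\Delta)$ integer values $t\in[\mu+\Delta,\mu+2\Delta]$, where $\mu=\frac{d_1d_2}{N}$, estimate each term $f(t)$ via Proposition~\ref{binomial_ineq}, and let the number of terms ($\approx\Delta$) compensate the $1/\sqrt{\mu}$-type prefactor while the exponent stays $O(K)$. The genuine divergence is in how the exponent is bounded, and it is worth comparing. You compute the Gaussian approximation honestly, with $E''(\mu)=-1/\sigma^2$ and quadratic term at least $-18K$ (via $\sigma^2\ge\mu/9$), but are then forced to control a third-order Taylor remainder; this is a real issue you correctly flag, and it does work out --- the hypothesis $K\le\frac{d_1d_2}{100N}$ gives $2\Delta\le\mu/5$, keeping all four quantities $t$, $d_1-t$, $d_2-t$, $N-d_1-d_2+t$ comparable to their values at $\mu$, and a computation shows $\Delta^3\max|E'''|=O(K)$ --- though it requires the case analysis near $d_2/N\in\{0,1\}$ that you only sketch. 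The paper avoids all of this: writing $t=(x+y)d_1$ and $d_2-t=(x-z)(N-d_1)$ with $x=\frac{d_2}{N}$ and $d_1y=(N-d_1)z=\theta\Delta$, it applies the one-line inequality $\log(1+s)\le s$ to each of the four logarithms in the entropy difference; inside each bracket the constant terms cancel, leaving a bound already quadratic in the deviations, namely $L\le\theta\Delta(y+z)\left(\frac{1}{x}+\frac{1}{1-x}\right)=\frac{\theta^2\Delta^2N^3}{d_1(N-d_1)d_2(N-d_2)}\le 36K$, with no remainder term, no third derivative, and no cases. So your route buys the asymptotically sharp Gaussian exponent $-\Delta^2/(2\sigma^2)$, which is more than the lemma needs, at the cost of a delicate remainder analysis; the paper's linearization trick gives a cruder constant but is strictly less work and is exactly what eliminates the obstacle you identify as your main difficulty.
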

\begin{proof}
For convenience, we write $f(t)=\binom{d_1}{t} \binom{N-d_1}{d_2-t}/\binom{N}{d_2}$. In order to show the
desired lower bound of the hypergeometric sum, it suffices to prove that
$$f(t)\ge \frac{4e^{-40K}}{\sqrt{\frac{d_1d_2}{N}+\Delta}},$$
for every integer $t=\frac{d_1d_2}{N}+\theta \Delta$ with $1 \le \theta \le 2$. Indeed, to see this, note that
there are at least $\lfloor \Delta \rfloor \ge \frac{\Delta}{2}$
integers between $\frac{d_1d_2}{N}+\Delta$ and $\frac{d_1d_2}{N}+2 \Delta$ and
$$\Delta > \frac{1}{2} \sqrt{\Delta^2 + \Delta} \geq \frac{1}{2}\sqrt{\frac{d_1d_2}{N}+\Delta}.$$

Next we prove the bound for $f(t)$. For our choice of $\Delta$, the inequality $\Delta \le \frac{d_1}{15}$ is true since
$$\Delta=\sqrt{\frac{d_1d_2K}{N}}= d_1\sqrt{\frac{d_2}{N}\cdot \frac{K}{d_1}} \le d_1\sqrt{\frac{d_2}{N}\cdot \frac{d_2}{100N}} = \frac{d_1}{10} \cdot \frac{d_2}{N}
\le
\frac{d_1}{15}.$$
Similarly $\Delta \le \frac{d_2}{15}$. Let $x=\frac{d_2}{N}$, $y=\frac{\theta \Delta}{d_1}$ and
$z=\frac{\theta \Delta}{N-d_1}$. Then $t = (x+y)d_1$ and $d_2 - t = (x-z)(N-d_1)$. But $0 < x+y < 1$, because
$0<x \le \frac{2}{3}$ and $0<y \le \frac{2\Delta}{d_1} < \frac{1}{3}$. Furthermore, $0 < x-z < 1$, because
$\frac{z}{x} =\frac{\theta \Delta N}{d_2(N-d_1)}\le \frac{3\theta\Delta}{d_2} \le \frac{2}{5}$ and $x \le \frac{2}{3}$.
By Proposition~\ref{binomial_ineq}, we have
$$f(t) =\frac{\binom{d_1}{(x+y)d_1}\binom{N-d_1}{(x-z)(N-d_1)}}{\binom{N}{xN}}\ge\frac{4\pi^2}{e^5}\sqrt{R} e^{-L},$$
where $L= d_1\cdot H(x+y) + (N-d_1)\cdot H(x-z) - N\cdot H(x)$ and
$$R=\frac{x(1-x)N}{(x-z)(1-x+z)(x+y)(1-x-y)d_1(N-d_1)}\ge \frac{1}{(x+y)d_1} \ge \frac{1}{2}\cdot \frac{1}{\frac{d_1d_2}{N}+\Delta}.$$
Here we used the inequality $\theta\leq 2$ and the identity $(x+y)d_1=t=\frac{d_1d_2}{N}+\theta\Delta$.
Because $d_1y=(N-d_1)z=\theta\Delta$ and $\log (1+s)\le s$, we obtain

\begin{align*}
L&=d_1\left[(x+y)\log\left(1 + \frac{y}{x}\right) + (1-x-y)\log\left(1-\frac{y}{1-x}\right) \right]\\
 &~~~+(N-d_1)\left[(x-z)\log \left(1 - \frac{z}{x}\right) + (1-x+z)\log \left(1+\frac{z}{1-x}\right)\right]\\
&\le d_1\left[\frac{(x+y)y}{x} - \frac{(1-x-y)y}{1-x} \right] + (N-d_1)\left[-\frac{(x-z)z}{x} + \frac{(1-x+z)z}{1-x}\right] \\
&= \theta \Delta\cdot(y+z)\cdot\left(\frac{1}{x} + \frac{1}{1-x}\right) = \frac{\theta^2\Delta^2 N^3}{d_1(N-d_1)d_2(N-d_2)} \le 36K.
\end{align*}
Thus we always have $f(t)\ge \frac{4\pi^2}{\sqrt{2}e^5}\cdot \frac{e^{-36K}}{\sqrt{\frac{d_1d_2}{N}+\Delta}}\ge
\frac{4e^{-40K}}{\sqrt{\frac{d_1d_2}{N}+\Delta}}$, completing the proof.
\end{proof}

The next lemma will be used to prove the lower bound in the sparse case of Theorem~\ref{theorem_main},
and was inspired by an analogous result in \cite{sudakov_similarity}.

\begin{lemma}
\label{sparse1}
For positive integers $n$ and $k$, let $N={n-\frac{n}{k} \choose k-1}$,
$\frac{w(n)}{N} \le p \le q \le \frac12$ and suppose that $pqN \leq \frac{1}{30}\log n$. Define $\gamma = \frac{\log n}{pq N}$.
Let $N_1, \ldots, N_s \subseteq B$ be $s \geq n^{1/3}$ disjoint sets of size $(1+o(1))Np$, and consider the random set $B_q$, obtained by taking each element of $B$ independently with
probability $q$. Then w.h.p., there is an index $i$ for which
\begin{enumerate}
\item [(1)]
$|B_q \cap N_i| \geq \frac{\log n}{6 \log \gamma}$ if $pN \ge \frac{\log n}{5\log \gamma}$;
\item [(2)]
$N_i \subseteq B_q$ if $pN < \frac{\log n}{5\log \gamma}$.
\end{enumerate}
\end{lemma}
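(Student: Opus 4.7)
The plan hinges on the disjointness of $N_1,\ldots,N_s$: since $B_q$ comes from independent coin flips on $B$, the random variables $|B_q\cap N_i|\sim\Bin(|N_i|,q)$, $i=1,\ldots,s$, are mutually independent, each with $|N_i|=(1+o(1))Np$. For both parts the strategy is to show that a \emph{single} $N_i$ witnesses the desired event with probability at least $n^{-\alpha}$ for some fixed $\alpha<1/3$; then independence over the $s\ge n^{1/3}$ sets boosts the probability of at least one success to $1-(1-n^{-\alpha})^{s}\ge 1-\exp(-n^{1/3-\alpha})$, which is $n^{-w(n)}$.

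For part~(1), set $m:=|N_i|$ and $t:=\lceil\log n/(6\log\gamma)\rceil$. The hypothesis $pqN\le\log n/30$ gives $\gamma\ge 30$, so $t/(mq)\ge\gamma/(6\log\gamma)>1$: we are in the upper tail beyond the mean $mq=(1+o(1))\log n/\gamma$. I would lower bound the tail by its single term at $t$:
\[
\Prob[\Bin(m,q)\ge t]\ \ge\ \binom{m}{t}q^{t}(1-q)^{m-t}\ \ge\ (mq/t)^{t}\,e^{-2qm},
\]
using the elementary bounds $\binom{m}{t}\ge(m/t)^{t}$ (valid since $m\ge t$, which in turn follows from $Np\ge\max\{w(n),\log n/(5\log\gamma)\}$) and $(1-q)^{m-t}\ge e^{-2qm}$ (valid for $q\le 1/2$). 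The second factor contributes a loss of $e^{-2qm}\ge n^{-1/15-o(1)}$ since $qm\le(1+o(1))pqN\le\log n/30$, and a routine calculation using $mq/t\ge 1/\gamma$ together with $t\log\gamma\approx\log n/6$ gives $(mq/t)^{t}\ge n^{-1/6-o(1)}$. Combining, the per-set probability is at least $n^{-7/30-o(1)}$, which is enough since $7/30<1/3$.

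For part~(2), the event $\{N_i\subseteq B_q\}$ has probability exactly $q^{|N_i|}$. The hypothesis $pN<\log n/(5\log\gamma)$ combined with $q=(pqN)/(pN)\ge 5\log\gamma/\gamma$ yields $-\log q\le\log(\gamma/(5\log\gamma))\le\log\gamma$; together with $|N_i|\le(1+o(1))\log n/(5\log\gamma)$ this gives $-|N_i|\log q\le(1+o(1))\log n/5$. Thus $q^{|N_i|}\ge n^{-1/5-o(1)}$, and since $1/5<1/3$ the same boosting argument applies.

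The main delicate point is the estimate $(mq/t)^{t}\ge n^{-1/6-o(1)}$ in part~(1), which hinges on $\gamma\ge 30$ keeping $\log\gamma$ bounded away from $0$ so that $\log(mq/t)$ is controlled; all other estimates are loose and pose no real obstacle.
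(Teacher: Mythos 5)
You have reproduced the paper's proof almost exactly: the same single-term lower bound $\binom{m}{t}q^{t}(1-q)^{m-t}\ge (mq/t)^{t}e^{-2qm}$ on the per-set probability, essentially the same computation $q^{|N_i|}\ge n^{-1/5-o(1)}$ in part (2), and the same use of disjointness of the $N_i$ to make the $s$ events independent. The only immaterial difference is the boosting step: you use $1-(1-\rho)^{s}\ge 1-e^{-s\rho}$ directly, while the paper bounds the expected number of good indices and applies Lemma~\ref{chernoff_ineq}. Part (2) of your argument is complete and correct throughout the allowed parameter range.

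The point you yourself flag as delicate in part (1) is, however, a genuine gap --- one your write-up inherits from the paper, where it appears in an even less visible form. Your claim ``$t\log\gamma\approx \log n/6$'' uses $\lceil\tau\rceil=(1+o(1))\tau$ for $\tau=\frac{\log n}{6\log\gamma}$, which requires $\tau\to\infty$, i.e.\ $\log\gamma=o(\log n)$; nothing in the hypotheses forces this. Concretely, take $k=2$ (so $N\sim n/2$), $p=n^{-3/4}$, $q=2n^{-3/4}\log n$: then $pqN=(1+o(1))n^{-1/2}\log n\le\frac{1}{30}\log n$, $\gamma=(1+o(1))n^{1/2}$, and $pN=\frac{1}{2}n^{1/4}\ge\frac{\log n}{5\log\gamma}\approx\frac{2}{5}$, so part (1) applies with threshold $\frac{\log n}{6\log\gamma}=\frac13$, i.e.\ the event is simply $|B_q\cap N_i|\ge 1$. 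Here $t=\lceil\tau\rceil=1$, so $(mq/t)^{t}=mq=(1+o(1))n^{-1/2}\log n$, \emph{not} $n^{-1/6-o(1)}$: the ceiling costs a factor of roughly $\gamma^{-(\lceil\tau\rceil-\tau)}$, which is $n^{-o(1)}$ only when $\log\gamma=o(\log n)$. Moreover, no repair of the estimate is possible in this regime, since the per-set probability is at most $\E\bigl[|B_q\cap N_i|\bigr]=(1+o(1))n^{-1/2}\log n$; with $s=n^{1/3}$ sets the probability that \emph{some} index works is $O(n^{-1/6}\log n)=o(1)$, so the lemma as stated is actually false when $\gamma\gg n^{1/3}\log n$ (one would need $s\gg\gamma/\log n$). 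The paper's own proof conceals the identical problem by writing $\binom{|N_i|}{t}$ for the real number $t=\frac{\log n}{6\log\gamma}$, which may be smaller than $1$; your more careful version with the ceiling is precisely what makes the failure visible. In short: same approach as the paper, both arguments correct exactly when $\log\gamma=o(\log n)$, and both silently broken --- along with the statement itself --- when $\log\gamma=\Theta(\log n)$.
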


\begin{proof}
Let $t=\frac{\log n}{6\log \gamma}$. Clearly $1-q \geq e^{-3q/2}$ when $q \le 1/2$.
For a fixed index $i$, the probability that $|B_q \cap N_i| \geq t$ is at least ${|N_i| \choose t}q^t(1-q)^{|N_i|-t}$. Using the bounds
${a \choose b} \geq (\frac{a}{b})^b$ for $a\ge b$, and $\frac{1}{30}\log n \geq Npq=\frac{\log n}{\gamma}$, we obtain
\begin{eqnarray*}
{|N_i| \choose t}q^t(1-q)^{|N_i|-t} &\geq& \left(\frac{(1+o(1))Npq}{t}\right)^t e^{-2pqN} \geq
\left(\frac{5 \log \gamma}{\gamma}\right)^{\frac{\log n}{6 \log \gamma}}n^{-1/15}\\
&\geq& n^{-1/6} \cdot n^{-1/15} \geq n^{-0.3}.
\end{eqnarray*}
Hence the expected number of indices $i$ such that $|B_q \cap N_i| \geq t$ is at least $s n^{-0.3} \geq n^{1/30}$.
Since the sets $N_i$ are disjoint, these events are independent for different choices of $i$. Therefore by Lemma~\ref{chernoff_ineq}
w.h.p. we can find such an index (actually many).

If $pN < \frac{\log n}{5\log \gamma}$, then $q=\frac{\log n}{\gamma pN} \geq \frac{5 \log \gamma}{\gamma} \geq \gamma^{-1}$.
Therefore the probability that some $N_i \subseteq B_q$ is
$$q^{|N_i|} \geq \gamma^{-(1+o(1)Np} \geq \gamma^{-\frac{\log n}{4\log \gamma}}=n^{-1/4},$$
and we can complete the proof as in the first case.
\end{proof}

The last lemma in this section, which can be easily derived from Vizing's Theorem, will be used to find a linear-size matching in nearly regular graphs.

\begin{lemma}
\label{lemma_matching}
Every graph $G$ with maximum degree $\Delta(G)$, contains a matching of size at least $\frac{e(G)}{\Delta(G) + 1}$.
\end{lemma}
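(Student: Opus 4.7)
The plan is to invoke Vizing's theorem directly, since the claim is essentially a pigeonhole consequence of a proper edge coloring. Recall that Vizing's theorem states that the edges of any simple graph $G$ can be properly colored using at most $\Delta(G) + 1$ colors, meaning that no two edges sharing a vertex receive the same color.

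Given such a proper edge coloring of $G$ with at most $\Delta(G)+1$ color classes, each color class is by definition a set of pairwise non-adjacent edges, i.e., a matching in $G$. Since the color classes partition the edge set of $G$ and there are at most $\Delta(G) + 1$ of them, the pigeonhole principle guarantees that at least one color class contains at least $\lceil e(G)/(\Delta(G)+1)\rceil \ge e(G)/(\Delta(G)+1)$ edges. This color class is the desired matching, completing the proof.

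There is essentially no obstacle here beyond citing Vizing's theorem, which is why the lemma statement advertises that it follows easily from Vizing. The only remark worth making is that the bound is tight up to the $+1$ in the denominator (as seen, for instance, for $G = K_n$ with $n$ odd, where the maximum matching has size $(n-1)/2$ while $e(G)/\Delta(G) = n/2$), so one cannot hope to replace $\Delta(G)+1$ by $\Delta(G)$ without extra hypotheses.
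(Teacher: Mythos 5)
Your proof is correct and is essentially identical to the paper's: both invoke Vizing's theorem to obtain a proper edge coloring with $\Delta(G)+1$ colors, observe that each color class is a matching, and apply the pigeonhole principle. The added remark on tightness is fine but not needed.
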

\begin{proof}
By Vizing's Theorem, the graph $G$ has a proper edge coloring $f: E(G) \to \{1,2,\ldots, \Delta(G) + 1\}$. For each color $1 \le c \le \Delta(G) + 1$, the edges $f^{-1}(c)$ form a
matching in $G$. By the pigeonhole principle, there is a color $c$ such that $f^{-1}(c)$ has at least $\frac{e(G)}{\Delta(G) + 1}$ edges.
\end{proof}

\section{Upper bounds}
\label{section_upper}
In this section we prove the upper bound for the discrepancy in Theorem~\ref{theorem_main}.
Let $G$ and $H$ be two random hypergraphs over the same vertex set $V$, distributed according to $\RandHyp{k}{n}{p}$ and $\RandHyp{k}{n}{q}$, respectively.
The \textit{probabilistic discrepancy} of $G$ and $H$ is defined by
$$\discP(G,H) = \max_{\pi} \left|e(G_\pi \cap H) - pq \binom{n}{k}\right|,$$
where the maximum is taken over all bijections $\pi: V \to V$. We will show that w.h.p. the difference between $\disc(G,H)$ and $\discP(G,H)$ is very small.
Before we proceed, we state the following fact whose proof is fairly trivial.
\begin{proposition}
\label{proposition_XY}
If $|AB-A_0B_0|\ge \epsilon_1\epsilon_2+|A_0|\epsilon_2+|B_0|\epsilon_1$, then either $|A-A_0|\ge \epsilon_1$ or $|B-B_0|\ge \epsilon_2$.
\end{proposition}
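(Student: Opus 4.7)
The plan is to prove this by contrapositive, which reduces the claim to a one-line algebraic identity. Suppose, aiming to establish the contrapositive, that both $|A - A_0| < \epsilon_1$ and $|B - B_0| < \epsilon_2$; I want to conclude that $|AB - A_0 B_0| < \epsilon_1\epsilon_2 + |A_0|\epsilon_2 + |B_0|\epsilon_1$.

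The natural move is to write $A = A_0 + \delta_1$ and $B = B_0 + \delta_2$, where by hypothesis $|\delta_1| < \epsilon_1$ and $|\delta_2| < \epsilon_2$. Expanding the product gives the identity
\[
AB - A_0 B_0 = A_0 \delta_2 + B_0 \delta_1 + \delta_1 \delta_2.
\]
Applying the triangle inequality and then substituting the bounds on $|\delta_1|$ and $|\delta_2|$ yields
\[
|AB - A_0 B_0| \le |A_0|\,|\delta_2| + |B_0|\,|\delta_1| + |\delta_1|\,|\delta_2| < |A_0|\epsilon_2 + |B_0|\epsilon_1 + \epsilon_1\epsilon_2,
\]
which is precisely the negation of the hypothesis $|AB - A_0 B_0| \ge \epsilon_1\epsilon_2 + |A_0|\epsilon_2 + |B_0|\epsilon_1$. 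This completes the contrapositive.

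There is no real obstacle here; the whole argument is a single expansion plus the triangle inequality, as the authors themselves remark by calling it ``fairly trivial.'' The only thing to be careful about is writing the product expansion symmetrically so that each of the three terms on the right-hand side of the hypothesis is matched by exactly one term in the bound (the $|A_0|\epsilon_2$ comes from $A_0\delta_2$, the $|B_0|\epsilon_1$ from $B_0\delta_1$, and the crucial ``cross'' term $\epsilon_1\epsilon_2$ from $\delta_1\delta_2$), which is what justifies the precise form in which the proposition will be applied later when bounding $|\disc(G,H) - \discP(G,H)|$ via a factorization of the form $e(G_\pi\cap H) - \rho_G\rho_H\binom{n}{k} = (\text{edge product}) - (\text{density product})\cdot\binom{n}{k}$.
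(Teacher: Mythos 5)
Your proof is correct: the paper itself states this proposition without proof (remarking only that it is ``fairly trivial''), and your contrapositive argument---writing $A=A_0+\delta_1$, $B=B_0+\delta_2$, expanding $AB-A_0B_0=A_0\delta_2+B_0\delta_1+\delta_1\delta_2$, and applying the triangle inequality---is exactly the intended one-line argument the authors omit. Note only that strictness of the final inequality is guaranteed by the term $|\delta_1||\delta_2|<\epsilon_1\epsilon_2$ (which holds since the hypotheses force $\epsilon_1,\epsilon_2>0$), so the contrapositive goes through even when $A_0$ or $B_0$ vanishes.
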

\begin{lemma}
\label{lemma_prob_disc}
With probability at least $1 -4e^{-\sqrt{n}}$, the inequality $|\disc(G,H) - \discP(G,H)| \le 2\varepsilon$ holds,
where $\varepsilon=4n^{\frac{1}{4}}\sqrt{pq\binom{n}{k}}$.
\end{lemma}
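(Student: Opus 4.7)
The key observation is that $\disc(G,H)$ and $\discP(G,H)$ are maxima over the \emph{same} set of bijections $\pi$ of two quantities differing only in the centering constant. For every $\pi$ the reverse triangle inequality gives
\[\bigl|\,|e(G_\pi \cap H) - \rho_G\rho_H\tbinom{n}{k}| - |e(G_\pi \cap H) - pq\tbinom{n}{k}|\,\bigr|\le |\rho_G\rho_H - pq|\tbinom{n}{k},\]
and taking $\max_\pi$ on both sides yields $|\disc(G,H)-\discP(G,H)|\le |\rho_G\rho_H - pq|\binom{n}{k}$. So it suffices to bound the right-hand side by $2\varepsilon$ outside an event of probability $4e^{-\sqrt n}$.

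Write $M=\binom{n}{k}$. The edge counts are binomial, $e(G)\sim \Bin(M,p)$ and $e(H)\sim \Bin(M,q)$, so I would apply Chernoff (Lemma~\ref{chernoff_ineq}) with deviations $\lambda_G = 2n^{1/4}\sqrt{pM}$ and $\lambda_H = 2n^{1/4}\sqrt{qM}$. The hypothesis $p,q\ge w(n)/N$ together with $M/N=\Theta_k(n)$ forces $pM,qM\gg \sqrt n$, so both $\lambda \le \mu$ and the tail bound $2e^{-\lambda^2/(4\mu)}=2e^{-\sqrt n}$ are valid. A union bound then gives, with probability at least $1-4e^{-\sqrt n}$,
\[|\rho_G-p|\le 2n^{1/4}\sqrt{p/M}=:\epsilon_1,\qquad |\rho_H-q|\le 2n^{1/4}\sqrt{q/M}=:\epsilon_2.\]

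To convert these into a bound on $|\rho_G\rho_H - pq|$ I would apply Proposition~\ref{proposition_XY} contrapositively with $A=\rho_G, B=\rho_H, A_0=p, B_0=q$, obtaining
\[|\rho_G\rho_H - pq|\cdot M \le M\epsilon_1\epsilon_2 + Mp\epsilon_2 + Mq\epsilon_1.\]
Now $M\epsilon_1\epsilon_2 = 4\sqrt n\,\sqrt{pq}$, while $Mp\epsilon_2 = 2n^{1/4}p\sqrt{qM}\le 2n^{1/4}\sqrt{pqM}$ and symmetrically $Mq\epsilon_1\le 2n^{1/4}\sqrt{pqM}$, using $p,q\le 1/2$. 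Since $M\ge n\ge \sqrt n$ gives $\sqrt n\le \sqrt{nM}/n^{1/4}$, equivalently $4\sqrt n\,\sqrt{pq}\le 4n^{1/4}\sqrt{pqM}$, summing the three terms yields
\[|\rho_G\rho_H - pq|\cdot M \le 8n^{1/4}\sqrt{pqM}=2\varepsilon,\]
which combined with the reduction in the first paragraph completes the argument.

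The proof is essentially routine bookkeeping; the only genuine point to verify is the Chernoff applicability condition $\lambda\le\mu$, and this is immediate from the lower bound on $p$. No optimization of constants is required, which matches the paper's stated convention.
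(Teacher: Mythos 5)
Your proposal is correct and takes essentially the same route as the paper: Chernoff bounds on $e(G)$ and $e(H)$ with deviation $2n^{1/4}\sqrt{p\binom{n}{k}}$ (probability loss $4e^{-\sqrt{n}}$), then Proposition~\ref{proposition_XY} to control $|\rho_G\rho_H - pq|\binom{n}{k}$, and the same arithmetic showing the three error terms sum to at most $2\varepsilon$. The only difference is that you spell out the reduction $|\disc(G,H)-\discP(G,H)|\le|\rho_G\rho_H-pq|\binom{n}{k}$, which the paper leaves implicit.
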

\begin{proof}
Since $p\binom{n}{k} = \Omega(n)$, applying Lemma~\ref{chernoff_ineq} to the random variable $e(G)$ for $\lambda=2n^{\frac{1}{4}}\sqrt{p\binom{n}{k}}\le p\binom{n}{k}$
yields
$$\Prob\left[\Big|e(G)-p\binom{n}{k}\Big|\le 2n^{\frac{1}{4}}\sqrt{p\binom{n}{k}}~\right]\ge 1-2e^{-\sqrt{n}}.$$
Similarly, we have $\Prob\left[|e(H)-q\binom{n}{k}|\le 2n^{\frac{1}{4}}\sqrt{q\binom{n}{k}}~\right]\ge 1-2e^{-\sqrt{n}}$.
Therefore, with probability at least $1-4e^{-\sqrt{n}}$, $|\rho_{G}-p|\le 2n^{\frac{1}{4}} \big(p/\binom{n}{k}\big)^{1/2}$ and
$|\rho_{H}-q|\le 2n^{\frac{1}{4}} \big(q/\binom{n}{k}\big)^{1/2}$. These inequalities, together with Proposition~\ref{proposition_XY}, imply
$$\left|\rho_{G}\rho_H\binom{n}{k}-pq\binom{n}{k}\right|\le 4\sqrt{pqn} + 2pn^{\frac{1}{4}}\sqrt{q\binom{n}{k}} + 2qn^{\frac{1}{4}}\sqrt{p\binom{n}{k}}\le 2\varepsilon,$$
completing the proof of the lemma.
\end{proof}
It is easy to check that the error term $\varepsilon$ is much smaller than the bounds in Theorem~\ref{theorem_main}.
Therefore, in order to prove Theorem~\ref{theorem_main} for $\disc(G,H)$, it suffices to prove the corresponding bounds for $\discP(G,H)$ instead.

\begin{lemma}
Let $G$ and $H$ be as in Theorem~\ref{theorem_main}. Then with high probability $\discP(G,H)$
satisfies the stated upper bounds of this theorem.
\end{lemma}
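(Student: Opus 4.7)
The plan is to exploit a simple but crucial observation. For any \emph{fixed} bijection $\pi\colon V\to V$, because $G$ and $H$ are independent and $\pi^{-1}$ permutes $\binom{V}{k}$, the events $\{\pi^{-1}(e)\in G\text{ and }e\in H\}$ over $e\in\binom{V}{k}$ are mutually independent, each of probability $pq$. Consequently $e(G_\pi\cap H)\sim \Bin\bigl(\binom{n}{k},pq\bigr)$ exactly, with mean $\mu:=pq\binom{n}{k}$. I therefore aim to prove, for each fixed $\pi$, a one-permutation tail bound $\Prob\bigl[|e(G_\pi\cap H)-\mu|>\lambda\bigr]\le e^{-2n\log n}$ at the value of $\lambda$ claimed in the corresponding case, and then take a union bound over the $n!\le e^{n\log n}$ bijections. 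Throughout, $\binom{n}{k}=\Theta_k(nN)$, so $\mu=\Theta_k(npqN)$.

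In the dense case $pqN>\frac{1}{30}\log n$, hence $\mu\ge C_k n\log n$. Setting $\lambda=c_k\sqrt{\mu n\log n}$, I would apply Lemma~\ref{chernoff_ineq} when $\lambda\le\mu$ to get the exponent $\lambda^2/(4\mu)=c_k^2 n\log n/4$, and a standard Bernstein-type variant $\Prob[X\ge\mu+\lambda]\le e^{-\lambda^2/(2\mu+\lambda)}$ when $\lambda>\mu$, whose exponent is then $\gtrsim c_k n\log n/\sqrt{k}$. In either regime, choosing $c_k$ large enough (depending on $k$) makes the per-permutation bound smaller than $e^{-2n\log n}$, and the union bound over bijections closes the dense case.

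For the sparse regime I split into subcases. Case (2.2) is essentially trivial: $e(G_\pi\cap H)\le e(G_\pi)=e(G)$, and $e(G)\le 2p\binom{n}{k}$ w.h.p.\ by Chernoff, while $\mu\le p\binom{n}{k}/2$ since $q\le\tfrac12$; thus $\discP(G,H)\le 3p\binom{n}{k}=O_k(p\binom{n}{k})$. In case (2.1) I set $\lambda=c_k n\log n/\log\gamma$. Because $\gamma\ge 30$, the ratio $\lambda/\mu=\Theta_k(\gamma/\log\gamma)$ is large, so Lemma~\ref{chernoff_ineq} is too weak; instead I would use the one-sided volume bound $\Prob[\Bin(m,\rho)\ge a]\le(em\rho/a)^a$ with $a=\mu+\lambda$, which evaluates to roughly $(C_k\log\gamma/(c_k\gamma))^{c_k n\log n/\log\gamma}$. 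The logarithm of this is $-c_k n\log n\bigl(1-O_k(1/\log\gamma)-O_k(\log\log\gamma/\log\gamma)\bigr)$, and for $c_k$ large enough this beats $2n\log n$. The lower tail needs no estimate, since $\mu-\lambda<0$ and the binomial is non-negative.

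The main obstacle is case (2.1): one is in the Poisson-like regime where Chernoff fails, and must solve the implicit relation $\lambda\log(\lambda/e\mu)\gtrsim n\log n$ to see that its solution is precisely $\lambda=\Theta_k(n\log n/\log\gamma)$. The constants interact nontrivially through $pqN=\log n/\gamma$ and $\binom{n}{k}=\Theta_k(nN)$, and the lower bound $\gamma\ge 30$ enforced by the sparse regime is essential: it keeps $\log\gamma$ bounded away from zero, so that the target $\lambda$ is meaningful and the resulting tail exponent is of the right order.
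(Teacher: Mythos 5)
Your proposal is correct and takes essentially the same approach as the paper: both observe that for each fixed bijection $e(G_\pi\cap H)\sim \Bin\big(\tbinom{n}{k},pq\big)$, bound the per-permutation tail by Chernoff in the dense regime and by the first-moment estimate $\big(e\tbinom{n}{k}pq/\lambda\big)^{\lambda}$ in case (2.1), take a union bound over the $n!$ bijections, and dispose of case (2.2) via $\discP(G,H)\le\max\big\{e(G),pq\tbinom{n}{k}\big\}$ with $e(G)\le 2p\tbinom{n}{k}$ w.h.p. The only divergence is bookkeeping at the case boundary: the paper splits on $pq\tbinom{n}{k}\gtrless 4n\log n$ so that Chernoff's hypothesis $\lambda\le\mu$ always holds and then reconciles the two expressions via $\gamma=\Theta_k(\gamma')$, whereas you keep the theorem's own dense/sparse split and patch the bottom of the dense range (where $\lambda>\mu$) with a standard Bernstein-type bound not listed in the paper's Section 2 --- both are valid.
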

\begin{proof}
Since the number of edges of $G$ is distributed binomially and $p\binom{n}{k}=\Omega(n)$, by Lemma~\ref{chernoff_ineq},
we have $e(G)< 2p\binom{n}{k}$ with probability at least $1-e^{-\Theta(n)}$. Since $\discP(G,H)$ is bounded by $\max\big\{e(G),pq\binom{n}{k}\big\}$, this implies the
assertion in the case (2.2) of Theorem~\ref{theorem_main}.

For any fixed bijection $\pi:V\to V$, the number of edges in $G_\pi \cap H$ is distributed binomially with parameters $\binom{n}{k}$ and $pq$.
If $pq\binom{n}{k} > 4n\log n$ let $\lambda=2\sqrt{pq\binom{n}{k}n\log n}\leq pq\binom{n}{k}$. Then by Lemma~\ref{chernoff_ineq}, the probability that
$\left|e(G_\pi \cap H) - pq\binom{n}{k}\right|> \lambda$ is at most $2e^{-n\log n}$. On the other hand, if
$pq\binom{n}{k}\le 4n\log n$, let $\gamma' = 4e\frac{n \log n}{pq \binom{n}{k}}\geq e$
and $\lambda=\frac{4e^2 n \log n}{\log \gamma'}\geq \frac{4e^2 n \log n}{\gamma'} = epq \binom{n}{k}$.
Since $\binom{a}{b} \le \left(\frac{ea}{b}\right)^b$, the probability that $e(G_\pi \cap H)> \lambda$ is at most
$${\binom{n}{k} \choose \lambda}(pq)^{\lambda} \leq \left(\frac{e\binom{n}{k}pq}{\lambda}\right)^{\lambda}=
\left(\frac{4e^2n \log n}{\gamma' \lambda}\right)^{\lambda} = \left(\frac{\gamma'}{\log \gamma'}\right)^{-\frac{4e^2 n \log n}{\log \gamma'}}<e^{-n\log n}.$$
Since there are $n!$ possible bijections $\pi:V\to V$, by the union bound
$$\Prob\left[\discP(G,H)>\lambda~\right]\le n!\cdot 2e^{-n\log n}\le e^{-n/2}.$$
To finish the proof of the lemma note that $\gamma$, defined in Theorem~\ref{theorem_main}, satisfies
$\gamma=\Theta_k(\gamma')$. Also observe that for $p,q$ satisfying both $pq\binom{n}{k}\le 4n\log n$ and $pqN \geq \frac{1}{30}\log n$, where $N={n-\frac{n}{k} \choose k-1}$, we have
$\sqrt{pq\binom{n}{k}n\log n}=\Theta_k\left( \frac{4e^2 n \log n}{\log \gamma'}\right)$.
\end{proof}

\section{Lower bounds}
\label{section_lower}

In this section we prove the lower bounds in Theorem~\ref{theorem_main}. As we previously explained, it is enough to obtain these bounds for
$\discP(G,H)$. We divide the proof into two cases.
The first ({\em dense case}) will be discussed in the next subsection.
The second ({\em sparse case}) will be discussed in
subsection~\ref{subsection_sparse}. Throughout the proofs, we assume that $k$ is fixed and $n$ is tending to infinity.

\subsection{Dense Case}\label{subsection_framework}
Let $N=\binom{n-\frac{n}{k}}{k-1}$ and let $p,q$ be such that $pq N>\frac{1}{30}\log n$.
Select an arbitrary set $L \subseteq V$ of size $|L| = \frac{n}{k}$.
We prove that w.h.p. there exists an \textit{$L$-bijection} $\pi:V\to V$ with overlap
\begin{equation}
\label{inequality_high_deviation}
e(G_\pi \cap H) \ge pq\binom{n}{k} + \Theta_k\left(n \cdot \sqrt{pqN \log n}\right)=pq\binom{n}{k} + \Theta_k\left(\sqrt{pq\binom{n}{k}n \log n}\right),
\end{equation}
where an $L$-bijection $\pi:V\to V$ is a bijection from $V$ to $V$ which only permutes the elements of $L$, i.e., $\pi(x) = x$ for all $x\not\in L$.

From the random hypergraph $G$ we construct a random bipartite graph $\widetilde{G}$ with vertex set $L_G\cup R$, where $L_G = L$ and $R$ is the set of
all $(k-1)$-tuples in $V\setminus L$. Note that $|R|=N$.
The vertices $v_1\in L_G$ and $\{v_2,v_3,\ldots, v_k\}\in R$ are adjacent if $\{v_1,v_2,\ldots, v_k\}$ forms an edge in the
hypergraph $G$. With slight abuse of notation, we view $\widetilde{G}$ as a sub-hypergraph of $G$,
containing all edges $e$ having exactly one vertex in $L$, i.e. $|e\cap L|=1$.
Similarly, from the random hypergraph $H$ we construct a random bipartite graph $\widetilde{H}$ with vertex set $L_H\cup R$.
Figure~\ref{picture_graphs} shows the resulting bipartite
graphs.

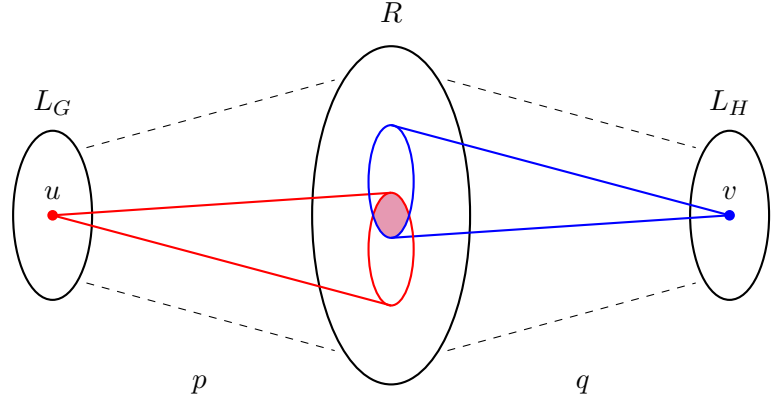
\begin{figure}[ht!]
  \centering
  \begin{tikzpicture}[scale=1.5,auto=left]
    \begin{scope}
      \clip (0,-0.3) ellipse (0.2 and 0.5);
      \clip (0,0.3) ellipse (0.2 and 0.5);
      \fill[color=purple!40] (-1,-1) rectangle (1,1);
    \end{scope}
    \draw[thick] (-3, 0) ellipse (0.35 and 0.75);
    \draw[thick] (0, 0) ellipse (0.7 and 1.5);
    \draw[thick] (3, 0) ellipse (0.35 and 0.75);
    \node at (-3, 1.0) {$L_G$};
    \node at (0, 1.8) {$R$};
    \node at (3, 1.0) {$L_H$};
    \node[circle, fill=red, inner sep=0pt, minimum width=4pt] at (-3, 0) [label=north:$u$]{};
    \node[circle, fill=blue, inner sep=0pt, minimum width=4pt] at (3, 0) [label=north:$v$]{};
    \draw[color=red, thick] (-3, 0) -- (0.0, 0.2);
    \draw[color=red, thick] (-3, 0) -- (0.0, -0.8);
    \draw[color=blue, thick] (3, 0) -- (0.0, 0.8);
    \draw[color=blue, thick] (3, 0) -- (0.0, -0.2);
    \draw[color=red, thick] (0,-0.3) ellipse (0.2 and 0.5);
    \draw[color=blue, thick] (0,0.3) ellipse (0.2 and 0.5);
    \draw[dashed] (-2.7, 0.6) -- (-0.5, 1.2);
    \draw[dashed] (-2.7, -0.6) -- (-0.5, -1.2);
    \draw[dashed] (0.5, 1.2) -- (2.7, 0.6);
    \draw[dashed] (0.5, -1.2) -- (2.7, -0.6);
    \node at (-1.7, -1.5) {$p$};
    \node at (1.7, -1.5) {$q$};
  \end{tikzpicture}
  \caption{Random bipartite graphs $\widetilde{G}$ and $\widetilde{H}$.}
  \label{picture_graphs}
\end{figure}

Given an $L$-bijection $\pi:V\to V$, we divide the edge set of $G_\pi\cap H$ into two subsets: the edge set of $\widetilde{G}_\pi\cap \widetilde{H}$ and its complement.
To prove our result we first expose the random edges in $\widetilde{G}$ and $\widetilde{H}$, and show how to find an $L$-bijection $\pi$ having overlap
at least $\Theta_k\left(n \cdot \sqrt{pqN \log n}\right)$ more than the expectation. Then we fix such $\pi$ and expose all the remaining edges in
$G$ and $H$ showing that the contribution of these edges to $G_\pi\cap H$ does not deviate much from the expected contribution.
More precisely, let $e_\pi= |E((G-\widetilde{G})_\pi)\cap E(H-\widetilde{H})|$, then $e(G_\pi\cap H)=e(\widetilde{G}_\pi\cap \widetilde{H})+ e_\pi$.
Moreover, $e_\pi$ is distributed according to $\Bin(m,pq)$, where $\frac{1}{2}\binom{n}{k} \leq m=\binom{n}{k}-N\frac{n}{k} \le \binom{n}{k}$. Thus w.h.p.
$|e_\pi -pqm| < \sqrt{pqm}\cdot\log n$, as Lemma~\ref{chernoff_ineq} shows. Since $\sqrt{pqm}\cdot \log n\ll \sqrt{pq\binom{n}{k}n \log n}$, in order to obtain (\ref{inequality_high_deviation}),
it is enough to show that w.h.p. there exists an $L$-bijection $\pi$ such that
\begin{equation}
\label{inequality_bipartite_graphs}
e(\widetilde{G}_\pi\cap \widetilde{H})\ge \frac{n}{k}\cdot\left(pqN+\Theta_k\left(\sqrt{pqN \log n}\right)\right).
\end{equation}

We define an auxiliary bipartite graph $\Gamma = \Gamma(\widetilde{G},\widetilde{H})$ as follows. A vertex $u\in  L_G$ {\it survives} if
$|\deg_{\widetilde{G}}(u)-pN|\le 2\sqrt{2pN}$ and similarly, a vertex $v\in L_H$ {\it survives} if $ |\deg_{\widetilde{H}}(v) - qN| \le 2 \sqrt{2qN}$. Let $S_G$ and $S_H$ be the sets of all surviving vertices of
$\widetilde{G}$ and $\widetilde{H}$, respectively. Let $s_G=|S_G|$ and $s_H=|S_H|$. The set of vertices of $\Gamma$ is the union of $S_G$ and $S_H$. The edges of $\Gamma$ are defined by the
property
$$u\sim_{\Gamma} v \iff \codeg(u,v) \ge \frac{\deg_{\widetilde{G}}(u)\deg_{\widetilde{H}}(v)}{N} + 10^{-2} \sqrt{pqN \log n},$$
where $\codeg(u,v)$ denotes the {\it codegree} of $u\in L_G$ and $v\in L_H$, i.e. $\codeg(u,v)=|N_{\widetilde{G}}(u)\cap N_{\widetilde{H}}(v)|$.
The graph $\Gamma$ has many vertices in both parts, as the following simple lemma demonstrates
\begin{lemma}
\label{lemma_survival}
W.h.p. each part of $\Gamma$ has size at least $\frac{n}{4k}$.
\end{lemma}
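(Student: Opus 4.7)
The plan is to argue that each vertex of $L_G$ survives with at least constant probability, that these survival events are mutually independent over $u \in L_G$, and then to apply Chernoff (Lemma~\ref{chernoff_ineq}) to the sum of indicators. The same argument works for $L_H$, and the union of the two events still holds w.h.p.

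More precisely, for each fixed $u \in L_G$, the degree $\deg_{\widetilde G}(u)$ is the number of $(k-1)$-subsets $e' \subseteq V \setminus L$ such that $\{u\} \cup e' \in E(G)$. Since distinct $u \in L$ give disjoint collections of potential $k$-edges of $G$ (they differ in the unique $L$-vertex they contain), the degrees $\{\deg_{\widetilde G}(u)\}_{u \in L_G}$ are \emph{independent}, and each is distributed as $\Bin(N,p)$. Hence $\mu = \E[\deg_{\widetilde G}(u)] = pN$, and since $pN \ge w(n) \to \infty$ we have $\lambda := 2\sqrt{2pN} \le \mu$. Lemma~\ref{chernoff_ineq} then gives
\begin{equation*}
\Prob\big[\,|\deg_{\widetilde G}(u) - pN| > 2\sqrt{2pN}\,\big] \le 2 e^{-\lambda^2/(4\mu)} = 2 e^{-2}.
\end{equation*}
Thus each $u \in L_G$ survives with probability at least $1 - 2e^{-2} \ge 0.72$.

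Writing $s_G = \sum_{u \in L_G} \mathbf{1}[u \text{ survives}]$, the summands are independent $0/1$ variables (by the observation above), and $\E[s_G] \ge 0.72 \cdot \frac{n}{k}$. A second application of Lemma~\ref{chernoff_ineq} with deviation $\lambda' = \E[s_G] - \frac{n}{4k} \ge \frac{1}{2}\E[s_G]$ yields
\begin{equation*}
\Prob\!\left[\,s_G < \tfrac{n}{4k}\,\right] \le 2 e^{-\E[s_G]/16} = e^{-\Omega(n)},
\end{equation*}
which is a w.h.p.\ statement in the sense defined in the paper. The identical argument applied to $\widetilde H$ (with parameter $q$ in place of $p$) gives $s_H \ge \frac{n}{4k}$ w.h.p., and a union bound finishes the proof.

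The only thing to check carefully is the independence claim and the verification $\lambda \le \mu$, both of which come down to the assumption $p \ge w(n)/N$ and the bipartite structure of $\widetilde G$. There is no real obstacle here; the lemma is essentially two routine Chernoff applications, with the observation that the edge sets of $\widetilde G$ incident to different $L$-vertices are disjoint subsets of $E(G)$, hence independent under the $\RandHyp{k}{n}{p}$ model.
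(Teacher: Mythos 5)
Your proof is correct and follows essentially the same route as the paper: a Chernoff bound (Lemma~\ref{chernoff_ineq}) on each degree $\deg_{\widetilde G}(u)\sim\Bin(N,p)$ shows each vertex survives with probability at least $1-2e^{-2}$, independence of the survival events across $L_G$ is used, and a second Chernoff application to $s_G$ gives the w.h.p.\ bound. The only cosmetic difference is that the paper phrases the second step via stochastic domination of $\Bin(n/k,1/2)$, while you apply the concentration inequality to the sum of survival indicators directly; both are equally valid.
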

\begin{proof}
Let $\alpha$ be the probability that some vertex $u$ survives in $L_G$. Since $pN \ge w(n) \ge 8$, we have that
$2\sqrt{2pN} \leq pN$. Thus Lemma~\ref{chernoff_ineq} applied to $\deg_{\widetilde{G}}(u)$ implies $\alpha \ge 1-2e^{-2}\ge 1/2$.
Since the events that vertices survive are independent, $s_G$ stochastically dominates the binomial distribution with
parameters $n/k$ and $1/2$. Thus, again by Lemma~\ref{chernoff_ineq}, w.h.p. $s_G \geq n/(4k)$ and a similar estimate holds for $s_H$.
\end{proof}

To prove \eqref{inequality_bipartite_graphs}, we will show that the following two statements hold w.h.p.
\begin{itemize}
\item[(a)] $\Gamma$ has a matching $M = \{(u_1, v_1), \ldots, (u_l, v_l)\}$ of size $l=\frac{n}{50k}$;
\item[(b)] there exists an $L$-bijection $\pi$ such that $\pi(u_i)=v_i$ for
all $i=1,2,\ldots, l,$ and, $$ \sum_{u \in L_G\setminus\{u_1,u_2,\ldots, u_l\}} \codeg(u, \pi(u)) \ge \left(\frac{n}{k}-l\right)pqN - 2\frac{n}{k}\sqrt{pqN}.$$
\end{itemize}
Indeed, for any two adjacent vertices $u,v$ in $\Gamma$, we have
$$\frac{\deg_{\widetilde{G}}(u)\deg_{\widetilde{H}}(v)}{N}\ge \frac{(pN-\sqrt{8pN})(qN-\sqrt{8qN})}{N} \ge pqN - 6\sqrt{pqN}.$$
Thus using (a), (b) and $l = \frac{n}{50k}$ we obtain
\begin{eqnarray*}
e(\widetilde{G}_\pi\cap \widetilde{H}) &=& \sum_{u \in L_G} \codeg(u, \pi(u)) \geq \sum_{i=1}^l\codeg(u_i, v_i)+
\left(\frac{n}{k}-l\right)pqN - 2\frac{n}{k}\sqrt{pqN}\\
&\geq& \sum_{i=1}^l\left[ \frac{\deg_{\widetilde{G}}(u_i)\deg_{\widetilde{H}}(v_i)}{N} + 10^{-2} \sqrt{pqN \log n}\right]+\left(\frac{n}{k}-l\right)pqN -
2\frac{n}{k} \sqrt{pqN}\\
&\geq&  \sum_{i=1}^l \left[ pqN - 6\sqrt{pqN}\right] +\frac{n}{50k}10^{-2} \sqrt{pqN \log n}+\left(\frac{n}{k}-l\right)pqN -
2\frac{n}{k}\cdot \sqrt{pqN}\\
&\geq& \frac{n}{k} \left( pqN + 10^{-4} \sqrt{pqN \log n}\right)
\end{eqnarray*}

We need the following lemma in order to prove that (b) holds.
\begin{lemma}\label{average}
Let $0<\alpha<1$ be any absolute constant. Then with probability at least
$1-e^{-\frac{n}{k}}$, any two subsets $A\subseteq L_G$ and $B\subseteq L_H$ with $|A|=|B|=\frac{\alpha n}{k}$ satisfy $$X_{A,B}:=\sum_{u\in A, v\in B} \codeg(u,v)\ge
\left(\frac{\alpha n}{k}\right)^2pqN- 2\alpha \left(\frac{n}{k}\right)^2\sqrt{pqN}.$$
\end{lemma}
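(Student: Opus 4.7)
The plan is to apply Janson's inequality (Lemma~\ref{janson_ineq}) to a fixed pair $(A,B)$ and then take a union bound over the $\binom{n/k}{\alpha n/k}^2$ such pairs. To set up Janson, decompose
$$X_{A,B} \;=\; \sum_{(u,v,w)\in A\times B\times R} Y_{u,v,w}, \qquad Y_{u,v,w}\;=\;\mathbf{1}\bigl[uw\in E(\widetilde G)\bigr]\cdot\mathbf{1}\bigl[vw\in E(\widetilde H)\bigr],$$
taking $\Omega$ to be the disjoint union of all potential edges of $\widetilde G$ and $\widetilde H$ (included independently with probabilities $p$ and $q$ respectively), and $A_{u,v,w}=\{uw,vw\}\subseteq\Omega$. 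Then $Y_{u,v,w}$ is exactly the indicator that $A_{u,v,w}$ is contained in the random edge set, so the Janson framework applies verbatim.

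A direct calculation gives $\mu = \E[X_{A,B}] = |A||B|Npq = \alpha^2(n/k)^2 pqN$. Two indicators $Y_{u,v,w}$ and $Y_{u',v',w'}$ are dependent exactly when $A_{u,v,w}\cap A_{u',v',w'}\neq\emptyset$, which forces $w=w'$ together with exactly one of $u=u'$ or $v=v'$ (both would make the triples coincide). Summing over such ordered coincidences yields
$$\Delta \;\le\; |A||B|Npq\bigl(|A|p+|B|q\bigr) \;=\; \alpha^3(n/k)^3 Npq\,(p+q).$$
I then apply Lemma~\ref{janson_ineq} with $\lambda = 2\alpha(n/k)^2\sqrt{pqN}$, so $\lambda^2 = 4\alpha^2(n/k)^4 pqN$. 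A short case split controls the ratio $\lambda^2/(2\mu+\Delta)$: if $\alpha(n/k)(p+q)\le 2$, then $\Delta\le 2\mu$ and $\lambda^2/(2\mu+\Delta)\ge \lambda^2/(4\mu)=(n/k)^2$; otherwise $\Delta>2\mu$ and, using $p+q\le 1$, one obtains $\lambda^2/(2\mu+\Delta)\ge \lambda^2/(2\Delta)=2(n/k)/(\alpha(p+q))\ge 2(n/k)/\alpha$.

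In either case the per-pair failure probability is at most $\exp\bigl(-(2/\alpha)(n/k)\bigr)$. A brief calculus check shows that $2/\alpha - 2h(\alpha)>1$ for every $\alpha\in(0,1)$, where $h(\alpha)=-\alpha\log\alpha-(1-\alpha)\log(1-\alpha)$ is the binary entropy; hence this per-pair exponent comfortably dominates the entropy bound $\binom{n/k}{\alpha n/k}^2 \le e^{2h(\alpha)n/k}$ that controls the union bound, and the overall failure probability is at most $e^{-n/k}$. The main technical point is the second case of the split, where $\Delta$ dominates $\mu$ and the Janson exponent drops from $(n/k)^2$ all the way down to $\Theta(n/k)$; it is precisely in this regime (corresponding to the upper end of the dense range, with $p,q$ of constant order) that one must verify $2/\alpha>1+2h(\alpha)$, and this is what dictates the constant $2$ and the $\sqrt{pqN}$-scale deviation in the stated lower bound.
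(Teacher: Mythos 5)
Your proof is correct and follows essentially the same route as the paper: the same decomposition of $X_{A,B}$ into indicators over triples $(u,v,w)$, the same dependency structure ($w=w'$ plus exactly one vertex coincidence), the same application of Janson's inequality with $\lambda = 2\alpha(n/k)^2\sqrt{pqN}$, and the same union bound over pairs $(A,B)$. The only differences are bookkeeping: the paper bounds $2\mu+\Delta \le \frac{4}{3}\left(\frac{\alpha n}{k}\right)^3 Npq$ directly (getting a per-pair exponent of $3n/k$) and uses $\binom{n/k}{\alpha n/k}\le (e/\alpha)^{\alpha n/k}$ with $\alpha\log(e/\alpha)\le 1$, whereas your case split plus the entropy comparison $2/\alpha - 2h(\alpha) > 1$ (which indeed holds for all $\alpha\in(0,1)$, the minimum being about $1.5$ near $\alpha\approx 0.82$) reaches the same conclusion.
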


\begin{proof} Let $X_{w,u,v}$ be the indicator of $wu\in E(\widetilde{G})$ and $wv\in E(\widetilde{H})$ for $w\in R, u\in A, v\in B$. So $X_{A,B}=\sum_{w\in R, u\in A, v\in B}
X_{w,u,v}$ and $\E[{X_{w,u,v}}]=pq$. Moreover, $X_{w,u,v}$ and $X_{w',u',v'}$ are dependent if and only if $wu=w'u'$ or $wv=w'v'$. Thus, $\mu=\E[X_{A,B}]= \left(\frac{\alpha
n}{k}\right)^2 N pq$ and $$\Delta=\sum_{w\in R, u\in A}~ \sum_{v,v'\in B} \E[X_{w,u,v}\cdot X_{w,u,v'}]+\sum_{w\in R, v\in B}~ \sum_{u,u'\in A} \E[X_{w,u,v}\cdot
X_{w,u',v}]=\frac{\alpha n}{k} \binom{\frac{\alpha n}{k}}{2} Npq\left(p+q\right),$$
where $\mu$ and $\Delta$ are defined as in Lemma~\ref{janson_ineq}.
Let $F$ be the event that there exists at least one pair of subsets $A\subseteq L_G, B\subseteq L_H$ with $|A|=|B|=\frac{\alpha n}{k}$ satisfying $X_{A,B}< (\frac{\alpha n}{k})^2 Npq
-2\alpha (\frac{n}{k})^2 \sqrt{Npq}$. By the union bound and by Lemma~\ref{janson_ineq}, we have
\begin{eqnarray*}
\Prob[F]&\le& \sum_{A\in \binom{L_G}{\alpha n}, B\in \binom{L_H}{\alpha n}} \Prob\left[X_{A,B}<\mu -2\alpha \left(\frac{n}{k}\right)^2 \sqrt{Npq}\right]
\le \binom{\frac{n}{k}}{\frac{\alpha n}{k}}^2 e^{- \frac{\left(2\alpha (\frac{n}{k})^2 \sqrt{Npq}\right)^2}{2\mu+\Delta}}\\
&\le& \left(\frac{e}{\alpha}\right)^{\frac{2\alpha n}{k}} e^{-3\frac{n}{k}} \le e^{-\frac{n}{k}},
\end{eqnarray*}
since $2\mu + \Delta \le \frac{4}{3}\left(\frac{\alpha n}{k}\right)^3Npq$, $\alpha<1$ and $\alpha \log (e/\alpha) \leq 1$ for all such $\alpha$.\qedhere
\end{proof}
Let $M = \{(u_1, v_1),\ldots, (u_l, v_l)\}$ be a matching satisfying (a) and  let $A=L_G\setminus\{u_1,u_2,\ldots, u_l\}$ and $B=L_H\setminus\{v_1,v_2,\ldots, v_l\}$.
One can write $|A|=|B|=\frac{n}{k}-l=\frac{\alpha n}{k}$, where $\alpha=\frac{49}{50}$. Consider $X_{A,B}=\sum_{u\in A, v\in B} \codeg(u,v)$.
Then, by Lemma~\ref{average}, with probability at least $1-e^{-\frac{n}{k}}$, we have
$$\sum_{u\in A, v\in B} \codeg(u,v)\ge \left(\frac{n}{k}-l\right)^2pqN- 2 \frac{n}{k}\left(\frac{n}{k}-l\right)\sqrt{pqN}.$$
Since the complete bipartite graph with parts $A,B$ is a disjoint union of $\frac{n}{k}-l$ perfect matchings,
by the pigeonhole principle, there exists a matching $M'$ between $A$ and $B$ such that
$$\sum_{(u,v)\in M'} \codeg(u,v) \ge \frac{\sum_{u\in A, v\in B} \codeg(u,v)}{\frac{n}{k}-l}\ge \left(\frac{n}{k}-l\right)pqN - \frac{2n}{k}\sqrt{pqN}.$$
Then the matching $M\cup M'$ between $L_G$ and $L_H$ gives the desired $L$-bijection $\pi$ and proves (b).

\medskip

To finish the proof we need to establish (a). If $\Gamma$ is nearly regular, then by Lemma~\ref{lemma_matching}, $\Gamma$ would contain a linear-size matching.
Unfortunately this is not the case. However, we will show that it is possible to delete some edges of $\Gamma$ at random and obtain a
\textit{pruned graph} $\Gamma'$, which is nearly regular. Let
$$f(d_1, d_2):= \Prob\left[u \sim_\Gamma v | \deg_{\widetilde{G}}(u) = d_1, \deg_{\widetilde{H}}(v)=d_2\right],$$
where $|d_1 - pN| \le 2\sqrt{2pN}$ and $|d_2 - qN| \le 2\sqrt{2qN}$. Let $f_0$ be the minimum of $f(d_1,d_2)$ over all pairs $(d_1,d_2)$
in the domain of $f$. Suppose that $f_0\ge n^{-\frac{1}{2}}$, which we shall prove later.
We keep each edge $uv$ of $\Gamma$ in $\Gamma'$ independently with probability
$\frac{f_0}{f(d_1,d_2)}$, where $d_1 = \deg_{\widetilde{G}}(u)$ and $d_2= \deg_{\widetilde{H}}(v)$. Then, we claim that
for any vertex $u\in S_G$, $\deg_{\Gamma'}(u)$ is binomially distributed with parameters $s_H$ and $f_0$.
Indeed, by definition, $\Prob\left[u \sim_{\Gamma'} v | \deg_{\widetilde{G}}(u) = d_1,
\deg_{\widetilde{H}}(v)=d_2\right] = f_0$ for all possible $d_1,d_2$. Moreover, conditioning on the neighbors of $u$ in $\widetilde{G}$ and on the values of the degrees
$\deg_{\widetilde{H}}(v_1)$, $\deg_{\widetilde{H}}(v_2)$, $\ldots,$ $\deg_{\widetilde{H}}(v_m)$, the events $u\sim_{\Gamma} v_1, u\sim_{\Gamma} v_2$, $\ldots,$ and $u\sim_{\Gamma} v_m$
are all independent. Therefore, by definition of $\Gamma'$, it is easy to see that $u\sim_{\Gamma'} v_1$, $u\sim_{\Gamma'} v_2$, $\ldots$, and
$u\sim_{\Gamma'} v_m$ are independent as well. Thus for any $u\in S_G$, $\deg_{\Gamma'}(u)\sim \Bin(s_H,f_0)$ and similarly, $\deg_{\Gamma'}(v)\sim \Bin(s_G,f_0)$
for all $v\in S_H$.

Conditioning on the degrees of all vertices in $\widetilde{G}, \widetilde{H}$, we obtain sets $S_G$ and $S_H$, which w.h.p. satisfy the assertion
of Lemma~\ref{lemma_survival}, i.e., $|S_G|=s_G\geq \frac{n}{4k}$ and $|S_H|=s_H\geq \frac{n}{4k}$. Thus both $s_Gf_0$ and $s_Hf_0$ are $\Omega_k(\sqrt{n})$.
Since all degrees in $\Gamma'$ are binomially distributed, Lemma~\ref{lemma_survival} together with the union bound imply that w.h.p. all vertices $u\in S_G, v\in S_H$ satisfy
$$\frac{s_Hf_0}{2}\le \deg_{\Gamma'}(u)\le \frac{3s_Hf_0}{2} ~~\mbox{and}~~\frac{s_Gf_0}{2}\le \deg_{\Gamma'}(v)\le \frac{3s_Gf_0}{2}.$$
Therefore, the max-degree $\Delta(\Gamma')\le \max\left\{\frac{3s_Hf_0}{2},\frac{3s_Gf_0}{2}\right\}\le \frac{3nf_0}{2k}$ and $e(\Gamma')\ge \frac{s_Gs_Hf_0}{2}\geq
\frac{n^2f_0}{32k^2}$. Thus by Lemma~\ref{lemma_matching}, $\Gamma'$ has a
matching of size at least $\frac{e(\Gamma')}{\Delta(\Gamma')+1}\ge \frac{n}{50k},$ completing the proof of (a).

It remains to prove the bound $f_0\ge n^{-\frac{1}{2}}$. Let $K=\frac{\log n}{5000}\ge 1$.
Since $pN$ tends to infinity, $p \leq q \leq 1/2$ and  $|d_1- pN| \le 2\sqrt{2pN}$, we have
$1 \le  d_1=(1+o(1)) pN \le \frac{2N}{3}$. Similarly $1 \le  d_2 =(1+o(1)) qN \le \frac{2N}{3}$.
Also recall that $pqN \geq \frac{1}{30}\log n$, which implies
$$\frac{d_1d_2}{100N}=(1+o(1))\frac{pqN}{100} \geq (1+o(1))\frac{\log n}{3000}>K.$$
Therefore we can apply Lemma~\ref{hypergeom_ineq}
with $\Delta=\sqrt{\frac{d_1d_2K}{N}}>\frac{\sqrt{pqN\log n}}{100}$. By the definition of $f(d_1,d_2)$, we have
$$ f(d_1,d_2) = \sum_{t\ge \frac{d_1d_2}{N} + \frac{\sqrt{pqN\log n}}{100}}\frac{\binom{d_1}{t}\binom{N-d_1}{d_2-t}}{\binom{N}{d_2}} \ge \sum_{t\ge
\frac{d_1d_2}{N}+\Delta} \frac{\binom{d_1}{t}\binom{N-d_1}{d_2-t}}{\binom{N}{d_2}} \geq e^{-40K} >n^{-\frac{1}{2}}.$$
This completes the proof. \qed

\subsection{Sparse case}
\label{subsection_sparse}

In this subsection, we prove the lower bound in the sparse case $pq N \leq\frac{1}{30}\log n$. Note that, since $p \leq q$ in this case,
we have $p \leq N^{-1/2+o(1)}$. The proof runs along the same lines as that of the dense case differing only in the application of Lemma~\ref{sparse1}
to obtain an $L$-bijection $\pi:V\to V$ whose sum of codegrees $\sum_{u \in L_G} \codeg(u, \pi(u))$ is large.
Suppose first that $pN \geq \frac{\log n}{5\log \gamma}$. Recall that $\gamma = \frac{\log n}{pq N} \geq 30$ and thus
$\frac{\log n}{6\log \gamma} \geq \frac{\log n}{42\log \gamma}+\frac{\log n}{\gamma}=\frac{\log n}{42\log \gamma}+pq N$.
Therefore it is enough to find a bijection $\pi$ between $L_G$ and $L_H$ such that
$\sum_{u \in L_G} \codeg(u, \pi(u)) \geq (1+o(1))\frac{n}{k}\cdot \frac{\log n}{6\log \gamma}$.

Partition the vertices of $L_G$ into $r=\frac{n}{ks}$ disjoint sets $S_1, \ldots, S_r$ each of size $s=n^{2/5}$. We will construct $\pi$
by applying the following greedy algorithm to each set. Let us start with $S_1$.
The algorithm will reveal the edges emanating from $S_1$ to $R$ in $\widetilde{G}$ by repeatedly exposing the neighborhood of a vertex in $S_1$, one at a time.
Throughout this process, we construct a subset $S_1' \subseteq S_1$ of size $(1-o(1))|S_1|$ and a family of disjoint sets $N_u\subseteq R$, such that each
$N_u$ has size $(1 + o(1)) Np$ and is contained in the neighborhood of $u$, for all $u\in S_1'$.
At each step, we pick a fresh vertex $u$ in $S_1$ and expose its neighborhood. If $u$ has a set of $(1+o(1))Np$ neighbors which is disjoint from $N_{w}$ for all $w$ in the current $S_1'$, denote this
particular set by $N_u$ and put $u$ in the set $S_1'$; otherwise move to the next step. At every step, the union $X = \cup_{w\in S_1'} N_w$ has size at most $O(pN\cdot s) \leq N^{0.9+o(1)}$.
Moreover, every vertex in $R\setminus X$ is adjacent to $u$ independently with probability $p$.
Since $pN\ge w(n)$ tends to infinity with $n$, the set of neighbors of $u$ outside $X$ has size $(1+o(1))|R\setminus X|p=(1+o(1))Np$
with probability $1-o(1)$. Furthermore, for different vertices such events are independent. Therefore, by Lemma~\ref{chernoff_ineq}, w.h.p. $|S_1'|=(1-o(1))|S_1|$.
Now we will construct the partial matching for $S_1$. Consider the disjoint sets $N_u$, for $u \in S_1'$, each of size
$(1+o(1))Np$. Pick an arbitrary vertex $v$ in $L_H$ and expose its neighbors in $\widetilde{H}$. This is a random subset $N_v$ of $R$, obtained by taking each element
independently with probability $q$. Therefore by case (1) of Lemma~\ref{sparse1}, w.h.p there is a vertex $u \in S_1'$ such that
$\codeg(u, v) \geq |N_u \cap N_v| \geq \frac{\log n}{6\log \gamma}$. Define $\pi(u)=v$, remove $u$ from $S_1'$, remove $v$ from $L_H$ and continue. Note that, as long
as there are at least $n^{1/3}$ vertices remaining in $S'_1$, we can match one of them with a newly exposed vertex from $L_H$ such that the codegree of this pair
is at least $\frac{\log n}{6\log \gamma}$. Once the number of vertices in $S_1'$ drops below $n^{1/3}$, leave the remaining vertices unmatched. W.h.p. we can match
a $1-o(1)$ fraction of the vertices in $S_1$.

Continue the above procedure for $S_2, \ldots, S_r$ as well. At the end of the process, we will have matched a $1-o(1)$ fraction of all the vertices in $L_G$
with distinct vertices in $L_H$ such that codegree of every matched pair is at least $\frac{\log n}{6\log \gamma}$. Therefore the sum of the codegrees of
this partial matching is at least $(1+o(1))\frac{n}{k}\cdot \frac{\log n}{6\log \gamma}$. To obtain the bijection $\pi$, one can match the remaining vertices in $L_G$
and $L_H$ arbitrarily.

When  $pN < \frac{\log n}{5\log \gamma}$ the same proof as above together with case (2) of Lemma~\ref{sparse1}
yields a bijection $\pi$ such that $\sum_{u \in L_G} \codeg(u, \pi(u)) \geq (1+o(1))\frac{n}{k}\cdot pN$. Since $ q\leq \frac12$, this is at least
$\left(\frac12+o(1)\right)\frac{n}{k}\cdot pN$ more than the expectation, finishing the analysis of the sparse case.
\qed

\section{Concluding remarks}
\label{section_remarks}
As we stated in the introduction, Theorem~\ref{theorem_main} also yields tight bounds when $p$ and/or $q>\frac{1}{2}$.
For any $G$ and $H$, one can check that $\disc(G, \overline{H}) = \disc(G,H)$, where $\overline{H}$ is the complement of $H$.
Moreover, $\overline{H}$ is distributed according to $\RandHyp{k}{n}{1-q}$, hence we can reduce the case $q>\frac{1}{2}$ to the case
$q'=1-q\le \frac{1}{2}$; the same holds when we take the complement of $G$ instead. We remark that one can determine
the discrepancy when $p$ is smaller than $\frac{\omega(n)}{N}$, but we chose not to discuss this range here, since the proof is similar to
the sparse case and it wouldn't provide any new insight.

The definition of discrepancy can be rephrased as $\disc(G,H)=\max\,\{\disc^+(G,H), \disc^-(G,H)\}$, where
$\disc^+(G,H)=\max_\pi \,e(G_\pi\cap H)-\rho_G\rho_H\binom{n}{k}$ and  $\disc^-(G,H)=\rho_G\rho_H\binom{n}{k}-\min_\pi \,e(G_\pi\cap H)$ are
the \textit{one-sided relative discrepancies}. In fact, all the lower bounds we obtained are for $\disc^+(G,H)$, and some of them
are not true for $\disc^-(G,H)$. This is because $\disc^-(G,H) \le \rho_G \rho_H \binom{n}{k} \simeq pq \binom{n}{k}$ and in
the sparse case, $pq\binom{n}{k}$ could be much smaller than $\disc(G,H)$.
Under the same hypothesis and using similar ideas as in Theorem~\ref{theorem_main}, one can show that
$$\disc^{-}(G,H) =\left\{ \begin{array}{ll}
\Theta_k\left(\sqrt{pq\binom{n}{k} n \log n}\right) & \text{ if } pqN > \frac{1}{30} \log n; \\
\Theta_k\left(pq\binom{n}{k}\right) & \text{ otherwise. }
\end{array}\right.$$
The last equation is related to the lower tail of the binomial distribution.

Lastly, we would like to mention that there are a substantial number of open problems about $\disc(G,H)$ and its related topics in \cite{bollobas_inter}.


\begin{thebibliography}{99}

\bibitem{alonspencer}
N.~Alon and J.~Spencer,
\newblock{\em The Probabilistic Method},
\newblock{John Wiley Inc.}, New York (2008).


\bibitem{beck_disc}
J. Beck and V.T. S\'os,
\newblock{\em Discrepancy theory},
in \newblock{Handbook of Combinatorics},
Vol. 2, 1405--1446, Elsevier, Amsterdam, 1995.



\bibitem{bollobas_disc}
B.~Bollob\'as and A.~Scott,
\newblock{\em Discrepancy in graphs and hypergraphs},
in \newblock{More sets, graphs and numbers}, Ervin Gyori, Gyula O.H. Katona and Laszlo Lov\'asz, eds, pp. 33--56,
Bolyai Soc. Math. Stud. {\bf 15},
Springer, Berlin, 2006.


\bibitem{bollobas_inter}
B.~Bollob\'as and A.~Scott,
\newblock{\em Intersection of graphs},
\newblock{J. Graph Theory {\bf 66}}  (2011), 261--282.


\bibitem{chaz_the}
B. Chazelle,
\newblock{\em The discrepancy method},
Cambridge University Press, Cambridge, 2000, xviii+463 pp.


\bibitem{chung_quasi}
F.R.K. Chung, R.L. Graham and R.M. Wilson,
\newblock{\em Quasi-random graphs},
\newblock{Combinatorica {\bf 9}} (1989), 345--362.


\bibitem{erdos_cutting}
P. Erd\H{o}s, M. Goldberg, J. Pach and J. Spencer,
\newblock{\em Cutting a graph into two dissimilar halves},
\newblock{J. Graph Theory {\bf 12}} (1988), 121--131.

\bibitem{erdos_imb}
P. Erd\H{o}s and J. Spencer,
\newblock{\em Imbalances in $k$-colorations},
\newblock{Networks {\bf 1}} (1971/2), 379--385.

\bibitem{sudakov_similarity}
C. Lee, P. Loh and B. Sudakov,
\newblock{\em Self-similarity of graphs},
\newblock{SIAM J. of Discrete Math.}, to appear.

\bibitem{mato_geom}
J. Matou\v sek,
\newblock{\em Geometric discrepancy},
\newblock{Algorithms and Combinatorics {\bf 18}},
Springer-Verlag, Berlin, 1999, xii+288 pp.

\bibitem{sos_irre}
V.T. S\'os,
\newblock{\em Irregularities of partitions: Ransey theory, uniform distribution},
in \newblock{Surveys in Combinatorics} (Southampton, 1983), 201--246, London Math. Soc. Lecture Note Ser., 82, Cambridge Univ. Press,
Cambridge-New York, 1983.





\end{thebibliography}
\end{document}